 \numberwithin{equation}{section}
\def\R{\mathbb{R}}
\def\N{\mathbb{N}}
\newtheorem{theo}{Theorem}
  \newtheorem{defi}{Definition}[section]
  \newtheorem{prop}{Proposition}[section]
        \newtheorem{exa}{Example}[section]
\begin{document}
\title[Heat flow of $p$-harmonic maps]{Heat flow of $p$-harmonic maps from Complete Manifolds into Generalised Regular Balls}
\author{Zeina AL Dawoud}
\address{Laboratoire de Math\'ematiques, UMR 6205 CNRS
Universit\'e de Bretagne Occidentale
6 Avenue Le Gorgeu, 29238 Brest Cedex 3
France}
\email{Zeina.Aldawoud@univ-brest.fr}

\begin{abstract}
We study the heat flow of $p$-harmonic maps between complete Riemannian manifolds. We prove the global existence of the flow when the initial datum has values in a generalised regular ball.  In particular, if the target manifold has nonpositive sectional curvature, we obtain the global existence of the flow for any initial datum with finite $p$-energy. If, in addition, the target manifold is compact,  the flow  converges to a $p$-harmonic map. This gives  an extension of the results of Liao-Tam \cite{Liao Tam} concerning the harmonic heat flow ($p=2$) to the case $p\ge2$. We also derive a Liouville type theorem for $p$-harmonic maps between complete Riemannian manifolds.
\end{abstract}
\maketitle

\section{Introduction}
Let $(M^m,g)$ and $(N^n,h)$ be two Riemannian manifolds, with $M$ compact.  For $p >1$, the $p$-energy of a map $u\in C^1(M,N)$ is defined by
\begin{equation}\label{11}
    E_p(u)=\frac{1}{p}\int_M |du(x)|^p dx, 
\end{equation}
where $|du(x)|$  is the Hilbert-Schmidt norm of  $du(x)$, and $dx$ stands for the Riemannian volume element of $M$. 
\medskip

$p$-harmonic maps are  critical points of the functional  $(1.1)$, that is,  they are solutions of the Euler-Lagrange equation associated to $(1.1)$  
\begin{equation} \label{12}
   \tau_p(u)=0, 
\end{equation}
where $\tau_p(u)=\mathrm{Trace}_g(\nabla(|du|^{p-2}du))$ denotes the $p$-tension field of $u$.  More precisely, let $(x^1, \cdots, x^m)$ and $(y^1, \cdots, y^n )$ be local coordinates on $M$ and $N$ respectively, then   denoting $\partial_j= {\partial\over \partial x^j}$ and $u^{\alpha}=y^{\alpha}(u)$,  equation  \eqref{12} takes the form  of the following system 

\begin{equation} \label{13} 
  -\frac{1}{\sqrt{|g|}}\partial_i\left(|du|^{p-2} \sqrt{|g|} g^{ij}\partial_j u^k \right) = 
 |du|^{p-2}g^{ij}\Gamma_{\alpha\beta}^{k}(u) \partial_i u^{\alpha}\partial_j u^{\beta}, \ 1\leq k\leq  n, 
\end{equation}
 where $\Gamma^{l}_{\alpha\beta}$ are the  Christoffel symbols of $N$, and  the Einstein summation convention on repeated indices is used. \\

In order to consider more general  class  of solutions of  system \eqref{13}, we shall  write it in an equivalent form which is  independent of the choice of coordinates.  By Nash's embedding theorem, we can embed $N$ isometrically  in an Euclidean space $\mathbb{R}^{L}$ using an isommetric embedding $i : N \to \R^L$. If we note again $u=i\circ u$, the energy functional \eqref{11} becomes 
$$E_p(u)= {1\over p} \int_M |\nabla u(x)|^p dx,$$
where  $|\nabla u|^2 = g^{ij} \langle \partial_iu, \partial_ju\rangle$, and $\langle  .\ , \ . \rangle$ denotes the Euclidean innner product of $\mathbb{R}^{L}$.

\medskip

Equation \eqref{13} takes the form 

\begin{equation} \label{14}
    -\Delta_{p} u=|\nabla u|^{p-2}A(u)(\nabla u,\nabla u),
\end{equation}

\medskip

\noindent where $\Delta_{p}u$ $=$ div$(|\nabla u|^{p-2}\nabla u)$ is the $p$-Laplacian, that is, 

$$\Delta_{p}u=\frac{1}{\sqrt{|g|}}\partial_i\left(\sqrt{|g|}|\nabla u|^{p-2}g^{ij}\partial_j u\right), $$

\noindent and $A$ is the second fundamental form of $N$ in $\mathbb{R}^{L}$  with the notation

$$A(u)(\nabla u,\nabla u)= g^{ij}A(u)(\partial_iu, \partial_ju).  $$

 \medskip

  In this case, $p$-harmonic maps are defined to be solutions of equation \eqref{14}.  We note that equation \eqref{14} makes sense even if $M$ is not compact.  \\

\par We define the Sobolev space $W^{1,p}(M,N)=\{u\in W^{1,p}(M,\mathbb{R}^{L}); \ u(x)\in N \ a.e\}$.  We say that a map $u\in W^{1,p}(M,N) \cap L^{\infty}(M, \R^L)$  is a weakly $p$-harmonic map if it is a weak solution of \eqref{14}. The study of the regularity of weakly $p$-harmonic maps is a delicate question due to the fact that \eqref{14} is a degenerate quasilinear elliptic system.  In general, the optimal regularity for weak solutions to systems involving the $p$-Laplacian   is   $C^{1+\beta}$ as shown by \cite{Tolksdorf}, and $C^{\infty}$ out off the vanishing set of $\nabla u$.  Solutions which are   $p$-energy-minimzing  are in  $C^{1+\beta}(M\setminus S,N)$ $(0<\beta<1)$ where $S$ is a singular set of Hausdorff dimension at most $m-[p]-1$ (see \cite{Hardt} \cite{Luckhauss}). 

\medskip

 In this paper we are interested in  heat flow of $p$-harmonic maps which is  the gradient flow associated with the $p$-energy functional. Namely, 
 
\begin{equation} \label{15}
    \left\{
                \begin{array}{ll}
                  \partial_t u-\Delta_{p} u =|\nabla u|^{p-2}A(u)(\nabla u,\nabla u),\\
                  \\
                 u(x,0)=u_{0}(x), \\
            
                \end{array}
              \right. 
 \end{equation}
 \\
 
\noindent where $u_0: M\rightarrow N$ is the initial datum of the flow. \\

Throughout this paper,   what we mean by a solution  $u$ of \eqref{15}  on  $M \times [0, T)$ is a map $u \in C_{loc}^{1+ \beta, \hspace{0.5mm}\beta/p}(M\times [0, T), N)$ (for some $0 < \beta < 1$)  which  is a  weak solution (in the distributional sense) of \eqref{15}. Indeed, as stated above,  the optimal regularity one could expect for $p$-harmonic type equations is $C^{1+ \beta}$ (and $C^{1+ \beta, \hspace{0.5mm}\beta/p}$ for parabolic equations).  \\

\par When $p=2$, Eells and Sampson \cite{Eells Sampson} were the first who studied the heat flow problem of harmonic maps. They proved that if $M$ and $N$ are compact Riemannian manifolds  and $N$ has nonpositive sectional curvature,  then \eqref{15} admits a global solution which converges at infinity to a harmonic map.  Li and Tam \cite{Li Tam} considered the case when both $M$ and $N$ are complete noncompact Riemannian manifolds. They proved the existence of a global solution when the Ricci curvature of $M$ is bounded from below,  $N$ has  nonpositive sectional curvature and if the initial datum $u_0$  is bounded as well as its energy density. 
Later on, Liao and Tam \cite{Liao Tam} showed that if $M$ is a complete non-compact manifold, $N$ is compact with nonpositive sectional curvature, and if the initial map has finite total energy, then \eqref{15}  admits a  global solution which converges on compact subsets of $M$ to a harmonic map from $M$ into $N$. 
It is well known that if the sectional curvature  of $N$ is nonnegative, a blow up phenomenon may occur. Without this condition, one has to impose some assumptions on $N$ in order to prevent the blow up of the solution. We refer the reader to the work of Struwe (\cite{struwe1}, \cite{struwe2} ) and Chen-Struwe \cite{cs} concerning the singularities  of the harmonic heat flow. When $M$ is complete, Li and Wang \cite{Li Wang} proved that there exits a global solution of the harmonic heat flow from $M$ into a generalised regular ball of  the target manifold $N$ which converges at infinity to a harmonic map when the initial datum has finite energy (we  refer to \cite{Li Wang} for the notion of generalised regular balls.) \\

When $p\geq 2$ Fardoun-Regbaoui \cite{fr1} and Misawa \cite{Misawa1}  proved the global existence and the  convergence of the $p$-harmonic heat flow when $M$ and $N$ are compact and $N$ has nonpositive sectional curvature generalising the result Eells and Sampson \cite{Eells Sampson} to the case $p\ge 2$.   When $N$ has arbitrary sectional curvature,  Hungerbuhler \cite{ hu2} proved the existence of weak solutions in  the conformal case $p=m$. See also Hungerbuhler \cite{ hu1} when the target manifold is a  homogenuous space. For small initial data,  Fardoun-Regbaoui \cite{ fr2} obtained the existence and convergence of the flow. We also mention the recent work of Misawa \cite{ Misawa2} concerning the regularity of the $p$-harmonic heat flow. 

\bigskip

Our goal  in this paper is to extend the  results of Liao-Tam above  to the $p$-harmonic heat flow for $p\ge2$. Accordingly, we will introduce the notion of regular sets  inspired by the work of Li and Wang \cite{Li Wang} concerning the harmonic heat flow. 

\medskip

\begin{defi}
Let $\Omega$ be an open subset of a Riemannian manifold $N$ and let $\delta>0$.   We say that $\Omega$ is a $\delta$-regular set  if there exist a positive function $f\in C^2(\Omega)$ and a  constant  $C>0$ such that, for all $y \in \Omega$, we have 
\begin{equation} \label{c1}
\begin{cases} 
- \nabla^2 f(y) - K_2(y)f(y)h(y) \ge    \delta \frac{|\nabla f(y)|^2}{f(y)} h(y)   
\vspace{0.2cm} \cr
C^{-1} \le f(y) \le C,
\end{cases}
\end{equation}

\medskip

\noindent where $h$ is the metric of $N$ and  $K_2(y)=\sup\{K(y,\pi),0\}$, with  $K(y,\pi)$ being  the sectional curvature of a 2-plane $\pi\subset T_yN$. 
\end{defi}

\bigskip

According to our definition of $\delta$-regular sets,  any   Riemannian manifold with nonpositive  sectional curvature  is a $\delta$-regular set for any $\delta >0$. Indeed,  if $N$ has nonpositive sectional curvature, then  condition \eqref{c1} above is automatically satisfied by taking $f=1$. 

\bigskip

\begin{defi} 
We say that  $\Omega \subset N$  is a $\delta$-generalised regular ball if it is  a $\delta$-regular set  and if there exists a positive  function $f^* \in C^{2}(N)$ which is convex on $\Omega$  such that 
\begin{equation} \label{c2} 
\Omega=(f^*)^{-1}\big([0,a)\big)
\end{equation}
for some $a >0$. 
\end{defi}

\bigskip

\begin{exa}\label{exa1}

If $N$  is a Riemannian manifold with nonpositive sectional curvature, then $N$ is a $\delta$-generalised regular ball  for any $\delta >0$ by taking $f = f^*= 1$ and $a >1$. 

\end{exa}

\medskip

\begin{exa}\label{exa2}

On the sphere $\mathbb{S}^n$ any geodesic ball $B(y,r)$, with $0< r< {\pi\over 2}$, is a   $\delta$-generalised regular ball with $\delta= {(\cos r - \cos r_1) \cos r_1 \over \sin^2 r}$, where $r_1 $ is any real number such that $r < r_1 < {\pi\over 2}$. Indeed, in polar coordinates $(\rho, \theta)$  centered at $y$,  if we set $f(\rho, \theta) = \cos\rho -\cos r_1$  and $f^*$ any smooth function on $\mathbb{S}^n$ such that $f^*(\rho, \theta) = \rho^2$ on $B(y,r)$, then one can check that  $\nabla^2 f = -(\cos\rho)h$, where $h = d \rho^2 + \sin^2\hspace{-1mm}\rho\hspace{1mm} d\theta $ is the standard metric on $\mathbb{S}^n$, and that $f$ satisfies condition \eqref{c1} with $\delta = {(\cos r - \cos r_1) \cos r_1 \over \sin^2 r}$, and $f^*$ satisfies condition \eqref{c2}.  More generally, by using the Hessian comparaison theorem  on a  Riemanniann manifold $N$,  one can see that any regular geodesic ball $B(y, r)$ in the sense of Hildebrandt \cite{hi}   is a $\delta$-generalised regular  ball for some $\delta>0$ depending on $ r$. 

\end{exa}

\bigskip

Troughhout this paper we suppose $p \ge 2$.  We state our first main result : 

\medskip

\begin{theo}\label{th1}
Let $(M^m,g)$  and  $(N^n,h)$ be two  Riemannian manifolds such that $M$ is compact and $N$  is complete. Let  $\Omega \subset N$  be a $\delta$-generalised regular ball  with 
$$\delta > \delta_p := 3(p-2)^2\left(\sqrt{m}+ 2p+6\right)^2 + 3. $$
Then  for any $u_0\in C^{\infty}(M,\Omega)$,   there exists a  unique global solution $u$ of \eqref{15} such that  $u \in C_{loc}^{1+ \beta , \hspace{0.6mm}\beta/p}(M\times[0,\infty),\Omega)$  for some  $\beta\in(0,1)$. Moreover,  we have $\partial_tu \in L^2(M\times[0, +\infty))$  and satisfies  the energy inequality 
\begin{equation} \label{16}
 \int_{0}^{T}\int_{M}|\partial_t u(x,t)|^2  dx dt+E_p\big(u(. , T)\big)\leq E_p(u_0) 
\end{equation}
for all $T>0$. If  we assume in addition that $N$ is compact, then there exists a sequence $t_k\rightarrow\infty$ such that $u(.,t_k)$ converges in $C^{1+\beta'}(M,\Omega)$ (for all $\beta'<\beta$) to a $p$-harmonic map $u_\infty\in C^{1+\beta}(M,\Omega)$ satisfying
$E_{p}(u_\infty)\leq E_{p}(u_0)$.

\end{theo}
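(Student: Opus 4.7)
The plan is to proceed by $\varepsilon$-regularization of the degenerate operator combined with two a priori estimates built from the pair $(f, f^*)$ in the definition of a $\delta$-generalised regular ball. After embedding $N \hookrightarrow \R^L$ via Nash, I would replace $|\nabla u|^{p-2}$ in \eqref{15} by $(|\nabla u|^2 + \varepsilon)^{(p-2)/2}$ for small $\varepsilon>0$. The resulting system is uniformly parabolic and quasilinear, and since $M$ is compact, standard parabolic theory (Ladyzhenskaya-Solonnikov-Ural'tseva, or Amann) produces a unique smooth short-time solution $u_\varepsilon$ on a maximal interval $[0, T_\varepsilon)$. To trap $u_\varepsilon$ inside $\Omega$, I would compute $(\partial_t - \Delta_{p,\varepsilon})f^*(u_\varepsilon)$: using that $f^*$ is convex on $\Omega$ (so its Hessian along the tangential directions is nonnegative) and that the second-fundamental-form contribution arising from extending $f^*$ to $\R^L$ cancels against the right-hand side of the equation, one obtains $(\partial_t - \Delta_{p,\varepsilon})f^*(u_\varepsilon) \le 0$ pointwise; the parabolic maximum principle then gives $\sup_M f^*(u_\varepsilon(\cdot,t)) \le \sup_M f^*(u_0) < a$, so that $u_\varepsilon(\cdot,t) \in \Omega$ for all $t \in [0, T_\varepsilon)$.

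The heart of the argument is the uniform gradient estimate, and this is where the constant $\delta_p$ is forged. I would set $F_\varepsilon = f(u_\varepsilon)\,(|\nabla u_\varepsilon|^2 + \varepsilon)^{p/2}$ and compute its evolution. A Bochner-type identity adapted to the regularized $p$-harmonic flow produces an inequality of the schematic form
\[
(\partial_t - \mathcal L_\varepsilon) F_\varepsilon \;\le\; \bigl(\delta_p - \delta\bigr)\,\frac{|\nabla f(u_\varepsilon)|^2}{f(u_\varepsilon)}\,(|\nabla u_\varepsilon|^2+\varepsilon)^{p/2} \;+\; \text{(manifestly nonpositive terms)},
\]
where $\mathcal L_\varepsilon$ is the linearization of the regularized equation. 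Three dangerous families of terms arise: sectional-curvature contributions from $N$, absorbed by the $-K_2 f\, h$ piece of \eqref{c1}; cross terms $\langle \nabla f(u_\varepsilon), \nabla|\nabla u_\varepsilon|^2\rangle$, absorbed via Cauchy-Schwarz into the reservoir $\delta |\nabla f|^2/f$; and $(p-2)$-errors arising from commuting $\nabla$ with $(|\nabla u_\varepsilon|^2 + \varepsilon)^{(p-2)/2}$, which after careful weighting contribute the quadratic $3(p-2)^2(\sqrt m + 2p + 6)^2$ to $\delta_p$, with the residual constant $3$ coming from absorbing a $|\nabla^2 u|^2$-type term against the good Laplacian piece. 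Under the hypothesis $\delta > \delta_p$ the right-hand side is nonpositive, and the maximum principle yields $\|\nabla u_\varepsilon\|_{L^\infty(M\times [0,T_\varepsilon))} \le C(u_0)$ independently of $\varepsilon$ and $T_\varepsilon$. This Bochner bookkeeping, tracking every constant so as to land exactly on the threshold $\delta_p$ and exploiting $p\ge 2$ in a crucial way, is the main technical obstacle.

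Once these uniform $L^\infty$ bounds on $u_\varepsilon$ and $\nabla u_\varepsilon$ are secured, the regularity theory for degenerate parabolic systems (DiBenedetto-Friedman, and the more recent estimates of Misawa \cite{Misawa2}) gives uniform $C^{1+\beta,\hspace{0.5mm}\beta/p}_{\mathrm{loc}}$ bounds. Passing $\varepsilon \to 0$ along a subsequence yields a solution $u$ of \eqref{15} of the stated regularity, and the time-independence of the a priori bounds extends it to $[0, \infty)$. Uniqueness follows from a standard $L^2$ energy comparison between two solutions, using the monotonicity inequality $\langle |A|^{p-2}A - |B|^{p-2}B, A - B\rangle \ge 0$ valid for $p \ge 2$. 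Testing the approximate equation against $\partial_t u_\varepsilon$ and passing to the limit gives \eqref{16}, and in particular $\partial_t u \in L^2(M\times[0,\infty))$.

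For the asymptotic statement when $N$ is compact, the $L^2$-integrability of $\partial_t u$ furnishes a sequence $t_k \to \infty$ along which $\|\partial_t u(\cdot, t_k)\|_{L^2(M)} \to 0$. The uniform $C^{1+\beta,\hspace{0.5mm}\beta/p}$ bound, restricted to the time slices $t = t_k$, combined with Arzelà-Ascoli on the compact manifold $M$, extracts a further subsequence converging in $C^{1+\beta'}(M, \Omega)$ (for any $\beta' < \beta$) to some $u_\infty \in C^{1+\beta}(M, \Omega)$. Passing to the limit in the weak formulation of \eqref{15} at times $t_k$, with the time derivative vanishing in $L^2$, shows $\tau_p(u_\infty) = 0$ in the distributional sense, so $u_\infty$ is $p$-harmonic. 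The inequality $E_p(u_\infty) \le E_p(u_0)$ follows from lower semicontinuity of $E_p$ and the monotonicity of $t \mapsto E_p(u(\cdot,t))$ recorded in \eqref{16}.
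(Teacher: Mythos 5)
Your overall architecture (regularisation, confinement in $\Omega$ via the convex function $f^*$ and the parabolic maximum principle, a Bochner-type estimate built from $f$, DiBenedetto regularity, passage to the limit, uniqueness via the monotonicity of $a\mapsto|a|^{p-2}a$, and convergence from $\partial_t u\in L^2$) coincides with the paper's. The gap is in the step you yourself identify as the heart of the argument: you claim that, once $\delta>\delta_p$ makes the curvature-of-$N$ contribution nonpositive, the \emph{pointwise maximum principle} applied to your Bochner quantity yields $\|\nabla u_\varepsilon\|_{L^\infty(M\times[0,T_\varepsilon))}\le C(u_0)$ uniformly in time. Your schematic inequality omits the Ricci curvature of the \emph{domain} $M$, which enters the Bochner formula (formula \eqref{26} of the paper) as a term $+2K_1F^{(p-2)/2}|\nabla u|^2$ with $-K_1\le 0$ a lower Ricci bound; for a general compact $M$ this term is positive and of order $\varphi^{p/2}$ in the quantity $\varphi=F/f^2(u)$. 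It cannot be absorbed by the good term produced by $\delta>\delta_p$, because that term carries the weight $|(\nabla f)\circ u|^2$, which vanishes identically in the most important case $f\equiv 1$ (nonpositively curved targets, Theorem \ref{th3}). The maximum principle then gives only $\frac{d}{dt}\max\varphi\lesssim K_1(\max\varphi)^{p/2}$, i.e.\ a bound that blows up in \emph{finite} time when $p>2$ — so your argument does not even yield global existence, let alone the time-uniform bound needed for the convergence statement.

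The paper closes this gap by replacing the maximum principle with Moser iteration (Proposition \ref{p32}): the differential inequality of Proposition \ref{p31} is tested against $\varphi^\gamma\phi^2$ on parabolic cylinders, combined with the Sobolev inequality and iterated, producing a parabolic mean-value estimate
$\|\nabla u\|_{L^\infty(Q_{R/2})}\le C_R\bigl(\int_{Q_R}|\nabla u|^p\,dx\,dt+1\bigr)$.
The time-uniformity is then imported from the energy identity \eqref{23}: since $E_{p,\varepsilon}$ is nonincreasing along the flow, $\int_{Q_R}|\nabla u|^p$ over a unit-length cylinder is bounded by $pE_p(u_0)$ independently of the time slice, and the Ricci term is harmless because it only enters the Moser constants through $K_R$. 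This integral mechanism (not the pointwise one) is what makes the gradient bound uniform in $t$ and $\varepsilon$, and it is also what localises to balls for the noncompact extension in Theorem \ref{th2}. To repair your proof you would need to either add the hypothesis $\mathrm{Ric}^M\ge 0$ (not assumed in the theorem) or replace the maximum-principle step by such an integral iteration fed by the energy decay.
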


\bigskip

Theorem \ref{th1}  allows us to prove the following  result between complete Riemannian  manifolds which is, to our knowledge, the first result concerning  the existence of  $p$-harmonic heat flow from complete noncompact Riemannian manifolds.

\bigskip

\begin{theo}\label{th2}
Let $(M^m,g)$  and $(N^n,h)$ be two complete Riemannian manifolds.  Let $\Omega \subset N$  be a $\delta$-generalised regular ball  with 
$$\delta > \delta_p := 3(p-2)^2\left(\sqrt{m}+ 2p+6\right)^2 + 3. $$
Then  for any $u_0\in C^{\infty}(M,\Omega)$ with $E_p(u_0) < + \infty$,  there exists a global solution $u$ of \eqref{15} such that  $u \in C_{loc}^{1+ \beta , \hspace{0.6mm}\beta/p}(M\times[0,\infty),\Omega)$  for some  $\beta\in(0,1)$. Moreover, $\partial_tu \in L^2(M\times[0, +\infty))$  and satisfies  the energy inequality 
\begin{equation}\label{17}
 \int_{0}^{T}\int_{M}|\partial_t u(x,t)|^2  dx dt+E_p\big(u(. , T)\big)\leq E_p(u_0) 
\end{equation}
for all $T>0$. If we assume in addition that $N$ is compact, then there exists a sequence $t_k\rightarrow\infty$ such that $u(.,t_k)$ converges in $C_{loc}^{1+\beta'}(M,\Omega)$ (for all $\beta'<\beta$) to a $p$-harmonic map $u_\infty\in C_{loc}^{1+\beta}(M,\Omega)$ satisfying $E_{p}(u_\infty)\leq E_{p}(u_0)$.
\end{theo}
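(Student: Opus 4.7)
The plan is to reduce Theorem \ref{th2} to Theorem \ref{th1} through an exhaustion of $M$ by compact manifolds, following the philosophy of Liao--Tam in the case $p=2$. Fix a base point $x_0 \in M$ and consider the geodesic balls $B_k = B(x_0, k)$ for $k \ge 1$. Since Theorem \ref{th1} is stated for a compact manifold without boundary, for each $k$ I would construct a closed Riemannian manifold $(\tilde M_k, \tilde g_k)$ containing an isometric copy of $B_k$ (for instance, by modifying the metric outside $B_k$ to a product near a sphere and doubling across that sphere), together with a smooth extension $\tilde u_{0,k} : \tilde M_k \to \Omega$ of $u_0\!\mid_{B_k}$ whose $p$-energy is controlled by $E_p(u_0) + 1/k$. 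Such an extension is possible because $u_0$ has finite $p$-energy on $M$ and one only needs to fill a cap of uniformly bounded volume; since $\Omega$ is a $\delta$-generalised regular ball, hence connected with a controlled geometry, one can interpolate $\tilde u_{0,k}$ on the cap without creating unbounded energy. Applying Theorem \ref{th1} on each $\tilde M_k$ then yields global solutions $u_k \in C^{1+\beta,\,\beta/p}_{loc}(\tilde M_k \times [0,\infty),\Omega)$ with values in $\Omega$ and satisfying (1.6).

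The next step is to extract a limit on $M \times [0,\infty)$. I would invoke the interior $C^{1+\beta,\,\beta/p}$ regularity estimates for the $p$-harmonic heat flow (with target values in the bounded set $\Omega$) to obtain uniform local bounds on $\{u_k\}$ over every compact subset of $M \times [0,\infty)$, independent of $k$. A diagonal argument then produces a subsequence converging in $C^{1+\beta',\,\beta'/p}_{loc}$ (for any $\beta' < \beta$) to a map $u$ with $u(\cdot,0)=u_0$, $u(x,t)\in \Omega$, solving \eqref{15} in the distributional sense on $M\times[0,\infty)$. The energy inequality \eqref{17} is obtained by restricting the energy inequality for $u_k$ to any compact $K \subset M$, passing to the limit using weak $L^2$ convergence of $\partial_t u_k$ to $\partial_t u$ on $K\times[0,T]$ (with distributional identification guaranteed by the $C^1_{loc}$ convergence of $u_k$) and pointwise convergence of $|\nabla u_k(\cdot,T)|^p$, then exhausting $K \nearrow M$ and using $E_p(\tilde u_{0,k}) \to E_p(u_0)$.

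For the convergence at infinity when $N$ is compact, \eqref{17} gives $\int_0^{\infty}\!\int_M |\partial_t u|^2\,dx\,dt \le E_p(u_0) < \infty$, so there exists $t_k \to \infty$ with $\|\partial_t u(\cdot,t_k)\|_{L^2(M)} \to 0$. On any fixed compact subset of $M$, the same interior regularity estimates applied at fixed time (and the boundedness of $\Omega$) furnish uniform $C^{1+\beta}$ bounds on $u(\cdot,t_k)$; a diagonal extraction yields a further subsequence converging in $C^{1+\beta'}_{loc}(M,\Omega)$ to a map $u_\infty$. Rewriting \eqref{15} at time $t_k$ as $-\Delta_p u(\cdot,t_k) - |\nabla u(\cdot,t_k)|^{p-2} A(u)(\nabla u,\nabla u) = -\partial_t u(\cdot,t_k)$, the right-hand side vanishes in $L^2_{loc}$, hence the limit $u_\infty$ is a weak $p$-harmonic map into $\Omega$; the bound $E_p(u_\infty)\le E_p(u_0)$ then follows from lower semicontinuity.

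The hard part is the interior regularity used in the limiting step: contrary to the case $p=2$ where classical linear parabolic theory suffices, obtaining $C^{1+\beta,\,\beta/p}_{loc}$ estimates for the degenerate quasilinear system \eqref{15} requires delicate arguments tailored to the $p$-Laplace heat operator (in the spirit of Misawa's regularity theory), and these estimates must be uniform in the approximations $\tilde M_k$. A secondary technical point is engineering the compact extensions $\tilde M_k$ and $\tilde u_{0,k}$ so that no spurious $p$-energy is created on the cap; this uses only that $u_0$ already has finite total energy and that $\Omega$ admits smooth interpolations, but it must be set up carefully so that the limit does indeed start from $u_0$ with the correct energy bound.
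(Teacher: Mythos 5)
Your overall architecture is the same as the paper's: exhaust $M$ by compact pieces, pass to doubled closed manifolds with smoothed metrics and extended initial data of controlled $p$-energy, apply Theorem \ref{th1} on each, extract a limit by a diagonal argument, and recover the energy inequality and the convergence at infinity exactly as you describe. The construction of the compact approximations and the extension of $u_0$, which you flag as a "secondary technical point", is handled in the paper just as you propose (doubling $\overline{U}_i$, smoothing the metric and the extended datum near $\partial U_i$, arranging $\int_{\widetilde{U}_i}|\nabla \widetilde{u}_{0,i}|^p \le 2\int_{U_i}|\nabla u_0|^p+1$), and is not the delicate part.

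The genuine gap is in the step where you "invoke the interior $C^{1+\beta,\beta/p}$ regularity estimates for the $p$-harmonic heat flow (with target values in the bounded set $\Omega$) to obtain uniform local bounds on $\{u_k\}$ independent of $k$". No such estimate holds merely because the image lies in a bounded set: for targets of positive curvature the flow can concentrate gradient and blow up, which is precisely why the hypothesis $\delta>\delta_p$ appears in the statement. Your proposal never uses that hypothesis, and attributing the needed bound to Misawa-type regularity theory does not close the gap, since that theory yields H\"older regularity of $\nabla u$ \emph{given} an $L^\infty$ gradient bound (or smallness), not an a priori interior gradient bound depending only on the local energy. What actually makes the limiting step work in the paper is the chain: the Bochner-type inequality for $\varphi=F/f^2(u)$ built from the $\delta$-regular-ball function $f$ (Proposition \ref{p31}), in which the condition $\delta>\delta_p$ kills the bad curvature term; then Moser iteration (Proposition \ref{p32}) giving $\|\nabla u\|_{L^\infty(B(x_0,R/2)\times[t_0-R/2,t_0])}\le C_R(\int_{Q_R}|\nabla u|^p+1)$; and finally the observation that the right-hand side is uniformly controlled by $E_p(u_0)+1$ via the energy inequality and the controlled extensions (Proposition \ref{p41}). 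Only after this uniform gradient bound is in hand does DiBenedetto's theory upgrade it to uniform local $C^{1+\beta,\beta/p}$ bounds, and the rest of your argument then goes through. As written, your proof defers the entire analytic content of the theorem to an estimate that does not exist in the generality you invoke it.
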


\bigskip

As a consequence of Theorem \ref{th2}, we have the following theorem concerning target manifolds with negative sectional curvature. It can be considered   as  a natural generalisation to the case $p \ge 2$ of the work of    Liao-Tam \cite{Liao Tam} concerning the heat flow of harmonic maps ($p=2$). 

\bigskip

\begin{theo} \label{th3}
Let $M$ and $N$ be two complete Riemannian manifolds such that  $N$ has nonpositive sectional curvature. Then for any $u_{0}\in C^{\infty}(M,N)$ with $E_p(u_0) < + \infty$,  there exists a global solution $u$ of \eqref{15} such that  $u \in C_{loc}^{1+ \beta , \hspace{0.6mm}\beta/p}(M\times[0,\infty),\Omega)$  for some  $\beta\in(0,1)$. Moreover, $\partial_tu \in L^2(M\times[0, +\infty))$  and satisfies  the energy inequality 
\begin{equation}\label{18}
 \int_{0}^{T}\int_{M} |\partial_t u(x,t)|^2  dx dt+E_p\big(u(. , T)\big)\leq E_p(u_0) 
\end{equation}
for all $T>0$. If we assume in addition that $N$ is compact, then there exists a sequence $t_k\rightarrow\infty$ such that $u(.,t_k)$ converges in $C_{loc}^{1+\beta'}(M,\Omega)$ (for all $\beta'<\beta$) to a $p$-harmonic map $u_\infty\in C_{loc}^{1+\beta}(M,\Omega)$ satisfying $E_{p}(u_\infty)\leq E_{p}(u_0)$.
\end{theo}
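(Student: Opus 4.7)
The plan is to deduce Theorem \ref{th3} as an immediate consequence of Theorem \ref{th2} via Example \ref{exa1}. The only real task is to verify the hypotheses of Theorem \ref{th2} with the choice $\Omega = N$.

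First, I would invoke Example \ref{exa1} to observe that, since $N$ has nonpositive sectional curvature, $\Omega := N$ is a $\delta$-generalised regular ball for every $\delta > 0$. Indeed, taking $f \equiv f^* \equiv 1$ and any $a > 1$, both conditions \eqref{c1} and \eqref{c2} hold trivially: on the one hand $-\nabla^2 f \equiv 0$, $K_2(y) \ge 0$ and $|\nabla f|^2/f \equiv 0$, so \eqref{c1} reduces to $0 \ge 0$; on the other hand $(f^*)^{-1}\big([0,a)\big) = N$, so \eqref{c2} is satisfied with $\Omega = N$. The uniform bounds $C^{-1} \le f \le C$ are of course immediate.

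Next, I would fix any $\delta > \delta_p = 3(p-2)^2(\sqrt{m}+2p+6)^2 + 3$. Then $\Omega = N$ satisfies the hypothesis of Theorem \ref{th2}. Since by assumption $u_0 \in C^\infty(M,N) = C^\infty(M,\Omega)$ with $E_p(u_0) < +\infty$, applying Theorem \ref{th2} verbatim produces a global solution $u \in C_{loc}^{1+\beta,\,\beta/p}(M\times[0,\infty),N)$ with $\partial_t u \in L^2(M\times[0,+\infty))$ satisfying the energy inequality \eqref{17}, which is exactly \eqref{18}. If in addition $N$ is compact, Theorem \ref{th2} also provides a sequence $t_k \to \infty$ along which $u(\cdot,t_k)$ converges in $C_{loc}^{1+\beta'}$ (for every $\beta' < \beta$) to a $p$-harmonic map $u_\infty \in C_{loc}^{1+\beta}(M,N)$ with $E_p(u_\infty) \le E_p(u_0)$.

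There is no genuine obstacle in this argument: all the analytic work (construction of approximate solutions on exhaustions by compact domains, uniform $C^{1+\beta,\beta/p}$ estimates, passage to the limit, and the energy monotonicity) has been absorbed into Theorem \ref{th2}. Example \ref{exa1} supplies the trivial auxiliary functions $f$ and $f^*$ that encode the nonpositive curvature assumption in the $\delta$-generalised regular ball framework, and the freedom to choose $\delta$ arbitrarily large — in particular larger than the explicit threshold $\delta_p$ — is precisely what enables the reduction to work for every $p \ge 2$ and every dimension $m$.
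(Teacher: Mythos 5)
Your proposal is correct and follows exactly the same route as the paper: the paper likewise observes, via Example \ref{exa1}, that a manifold with nonpositive sectional curvature is a $\delta$-generalised regular ball for every $\delta>0$ (with $f=f^*=1$), picks any $\delta>\delta_p$, and applies Theorem \ref{th2}. Your additional verification that conditions \eqref{c1} and \eqref{c2} hold trivially is a correct (if routine) elaboration of what the paper leaves implicit.
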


\bigskip

Our method allows us to prove  the following Liouville Theorem for $p$-harmonic maps.

\medskip

\begin{theo}\label{th4}
Let $M$ be a complete Riemannian manifold with nonnegative Ricci curvature and $N$ be a complete Riemannian manifold.  Suppose that $\Omega\subset N$ is a $\delta$-regular set with $\delta >\delta_p$, where $\delta_p$ is as in Theorem \ref{th1}. If $u \in C_{loc}^{1}(M, \Omega)$  is a $p$-harmonic map from $M$ into $\Omega$ with finite $p$-energy, then $u$ is constant. In particular, if $N$ has nonpositive sectional curvature, then any $p$-harmonic map $u \in C_{loc}^{1}(M, N)$  with finite $p$-energy from $M$ to $N$  is constant. 
\end{theo}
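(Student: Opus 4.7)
The plan is to derive, for any $p$-harmonic map $u:M\to\Omega$, a pointwise divergence inequality for a carefully chosen auxiliary function built from $|\nabla u|^p$ and $f\circ u$, and then exploit it via a Caccioppoli-type cutoff argument together with the volume growth control coming from nonnegative Ricci. The function $f\in C^2(\Omega)$ provided by the $\delta$-regular structure furnishes exactly the mechanism needed to absorb the positive sectional-curvature contributions of $N$ that appear in the Bochner formula.

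Concretely, I would first establish two weighted identities. Since $\tau_p(u)=0$, composing with $f$ gives
\begin{equation*}
\operatorname{div}\bigl(|\nabla u|^{p-2}\nabla(f\circ u)\bigr)=|\nabla u|^{p-2}\nabla^2 f(du,du),
\end{equation*}
and the first line of \eqref{c1}, traced on $M$ against $du$, yields
\begin{equation*}
-|\nabla u|^{p-2}\nabla^2 f(du,du)\ge \delta\,|\nabla u|^{p-2}\frac{|\nabla(f\circ u)|^2}{f\circ u}+K_2(u)(f\circ u)|\nabla u|^p.
\end{equation*}
In parallel, a Bochner identity for $p$-harmonic maps, written away from the degenerate set $\{\nabla u=0\}$ and later extended by the standard $(\varepsilon+|\nabla u|^2)^{(p-2)/2}$ regularisation, produces an inequality of the form
\begin{equation*}
\operatorname{div}\bigl(|\nabla u|^{p-2}\nabla|\nabla u|^2\bigr)\ge c_p|\nabla u|^{p-2}|\nabla du|^2+2|\nabla u|^{p-2}\operatorname{Ric}_M(du,du)-2K_2(u)|\nabla u|^{p+2},
\end{equation*}
plus a $(p-2)$-dependent cross-term of size $|\nabla u|^{p-4}\bigl|\nabla|\nabla u|^2\bigr|^2$ produced by the degeneracy. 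Combining the two inequalities through an auxiliary function of the form $G:=(f\circ u)^{-\alpha}|\nabla u|^p$, choosing the exponent so that the $K_2(u)$ terms cancel, and using $\operatorname{Ric}_M\ge 0$, one aims to reach
\begin{equation*}
\operatorname{div}\bigl(|\nabla u|^{p-2}\nabla G\bigr)\ge 0 \quad \text{on } M.
\end{equation*}
The threshold $\delta_p=3(p-2)^2(\sqrt{m}+2p+6)^2+3$ is precisely what is required for the Cauchy--Schwarz absorption of the $(p-2)^2$ cross-term into the good term $\delta|\nabla(f\circ u)|^2/(f\circ u)$; this absorption is the heart of the argument and the only place $\delta>\delta_p$ enters.

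Finally I would test the distributional inequality against $\eta^2$, where $\eta$ is a Lipschitz cutoff equal to $1$ on a geodesic ball $B_R(x_0)$, supported in $B_{2R}(x_0)$, with $|\nabla\eta|\le 2/R$. Integration by parts and Cauchy--Schwarz lead to a Caccioppoli estimate
\begin{equation*}
\int_{B_R}|\nabla u|^{p-2}|\nabla du|^2\,dx\le \frac{C}{R^2}\int_{B_{2R}}|\nabla u|^p\,dx,
\end{equation*}
and since $E_p(u)<\infty$, the right-hand side vanishes as $R\to\infty$. A parallel estimate applied directly to $G$ (using that $f$ is two-sidedly bounded, so $G\simeq|\nabla u|^p\in L^1(M)$) then upgrades this to $|\nabla u|\equiv 0$, i.e.\ $u$ is constant. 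The additional statement for $N$ of nonpositive sectional curvature follows by taking $\Omega=N$ and $f\equiv 1$, which satisfies \eqref{c1} for every $\delta>0$. The main technical obstacle is the bookkeeping inside the Bochner calculation needed to isolate the exact $(p-2)^2$ cross-term coefficient and verify that $\delta_p$ as stated is sufficient to absorb it; the extension of the inequality across $\{\nabla u=0\}$ via $\varepsilon$-regularisation is by now routine.
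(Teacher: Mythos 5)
Your overall strategy (Bochner formula plus the auxiliary function $f$ from the $\delta$-regular structure, combined into a single quantity whose $K_2$-terms cancel, then a cutoff argument using $E_p(u)<\infty$) is the same as the paper's, which works with $\varphi=|\nabla u|^2/f^2(u)$ and establishes on $E=\{\nabla u\neq 0\}$ the inequality
\[
-\operatorname{div}\bigl(F^{\frac{p-2}{2}}\nabla\varphi\bigr)\le (p-2)\operatorname{div}\bigl(F^{\frac{p-2}{2}}B(d\varphi,\cdot)\bigr)-\tfrac{1}{40}F^{\frac{p-4}{2}}f^2(u)\,|\nabla\varphi|^2 ,
\]
$F=|\nabla u|^2$. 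The gap in your proposal is the final step. Reducing the Bochner computation to the bare subharmonicity statement $\operatorname{div}(|\nabla u|^{p-2}\nabla G)\ge 0$ throws away the coercive term $-\tfrac1{40}F^{(p-4)/2}f^2(u)|\nabla\varphi|^2$, and then the cutoff argument does not close: testing $\operatorname{div}(w\nabla G)\ge0$ against $\eta^2$ alone gives only the one-sided, non-coercive relation $0\le -2\int\eta\, w\,\nabla G\cdot\nabla\eta$, while testing against $\eta^2 G$ produces a Caccioppoli bound whose right-hand side is $CR^{-2}\int G^2 w\simeq CR^{-2}\int|\nabla u|^{3p-2}$, which is \emph{not} controlled by the $p$-energy (and your remark that $G\simeq|\nabla u|^p\in L^1$ does not repair this; an $L^1$ Liouville theorem for the degenerate weighted operator is not available off the shelf). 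The paper's device is precisely calibrated to avoid this: it multiplies the inequality by $\phi_R^2\,\varphi_\varepsilon\,\varphi^{-1}$ with $\varphi_\varepsilon=(\varphi-\varepsilon)^+$, and the extra factor $\varphi^{-1}$ lowers the homogeneity so that, after Cauchy--Schwarz, the right-hand side is $CR^{-2}\int F^{p/2}\le CR^{-2}E_p(u)\to 0$. This choice of test function is the missing idea.

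A second, smaller gap is the endgame. Even after the cutoff argument one only obtains $\nabla\varphi=0$ (in your version, $\nabla du=0$) on the open set $E$ where $\nabla u\neq0$; this says $\varphi$ is constant on each connected component of $E$, not that $\nabla u\equiv0$. The paper completes the argument with a case distinction: if $E=M$, the lower bound $F\ge\lambda_0C^{-1}$ coming from $C^{-1}\le f\le C$ contradicts $\int_M F^{p/2}<\infty$ unless $\lambda_0=0$; if $E\neq M$, continuity of $F$ forces $F=0$ on $\partial\mathcal C_0\subset M\setminus E$ while $F\ge\lambda_0C^{-1}$ there, again giving $\lambda_0=0$. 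Your phrase ``upgrades this to $|\nabla u|\equiv0$'' skips this step entirely. Finally, note that the paper avoids your proposed $\varepsilon$-regularisation of the equation (which would change the map being studied) by instead truncating the test function, $\varphi_\varepsilon=(\varphi-\varepsilon)^+$, so that all integrations take place where $u$ is smooth.
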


\bigskip

The paper is organised as follows.  The heat flow equation being  a degenerate parabolic problem, we first establish  in Section \ref{s2} the existence of a global solution to the regularised equation of \eqref{15}. We then prove   uniform {\it a priori} gradient estimates on the solutions of the regularised equation in Section \ref{s3}. Section \ref{s4}  is devoted to the proof of our main results.


\bigskip

\section{The Regularised Heat Flow}\label{s2}

\bigskip

Since  \eqref{15} is a degenerate parabolic system, one can not apply directly the existence theory for parabolic equations. To overcome this difficulty  we introduce the regularised  $p$-harmonic heat flow equation.  Namely, for $0<\varepsilon<1$, the regularised  $p$-energy of $u$ is defined by
$$E_{p,\varepsilon}(u)=\frac{1}{p}\int_M\left(|\nabla u|^2+\varepsilon\right)^\frac{p}{2}dx, $$
and the  gradient flow associated to $E_{p,\varepsilon}$ is given by the following  second order parabolic system 
  \begin{equation}\label{21}
    \left\{
                \begin{array}{ll}
                  \partial_t u-\Delta_{p,\varepsilon}=\left(|\nabla u|^2+\varepsilon\right)^{\frac{p-2}{2}}A(u)(\nabla u,\nabla u),\\
                  \\
               
                 u(x,0)=u_{0}(x)\\
            
                \end{array}
              \right.   
\end{equation}
where  $$\Delta_{p,\varepsilon}=\frac{1}{\sqrt{|g|}}\partial_i\Big(\sqrt{|g|}\left( |\nabla u|^2+\varepsilon\right)^{\frac{p-2}{2}}g^{ij}\partial_j u\Big)$$ 
is the regularised $p$-Laplacian of $M$.

\bigskip

Since \eqref{21} is a parabolic system, then it follows from the classical  theory of parabolic equations that \eqref{21} admits a unique smooth solution $u_{\varepsilon}$ defined on a maximum interval $[0, T_{\varepsilon})$. For the sake of simplicity we denote $u$ our solution instead of  $u_{\varepsilon}$ and $T$ instead of  $T_{\varepsilon}$.  We have the following proposition.

\begin{prop}\label{p21}Let $p \ge 2$ and let  $\Omega \subset N$  be a $\delta$-generalised regular ball  for some $\delta >0$. Let $u_{0}\in C^{\infty}(M,\Omega)$ and let $u$ be the solution of the regularised problem \eqref{21} defined on a maximal interval $[0, T)$. Then we have  for any $(x,t) \in M \times[0, T)$ 
\begin{equation}\label{22}
u(x,t) \in \Omega. 
     \end{equation}
Moreover, $u$ satisfies  the energy formula 
\begin{equation}\label{23}
 {d \over dt} E_{p,\varepsilon}\big(u( . , t)\big)  =  - \int_M|\partial_t u(x,t)|^2  \ dx. 
     \end{equation}
In particular the energy $E_{p,\varepsilon}$ is nonincreasing along the flow.
\end{prop}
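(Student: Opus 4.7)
The two statements decouple cleanly: the invariance of $\Omega$ rests on the convexity of $f^*$ together with a maximum principle, while the energy identity is an integration by parts that uses only compactness of $M$.

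For \eqref{22}, the plan is to compose $u$ with the convex function $f^*$ and run a parabolic maximum principle on $\phi(x,t):=f^*(u(x,t))$. First I would extend $f^*$ smoothly to a tubular neighbourhood of $N$ in $\R^L$, differentiate $\phi$ using the chain rule in the embedded picture, and substitute \eqref{21} to obtain an identity of the form
\begin{equation*}
\partial_t\phi - \Delta_{p,\varepsilon}\phi = F\bigl(\langle \nabla f^*(u), A(u)(\nabla u,\nabla u)\rangle - \mathrm{Hess}_{\R^L}(f^*)(u)(\nabla u,\nabla u)\bigr),
\end{equation*}
where $F=(|\nabla u|^2+\varepsilon)^{(p-2)/2}$ and $\Delta_{p,\varepsilon}$ on the scalar $\phi$ denotes the analogous divergence-form operator with the same weight $F$. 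The key algebraic point is the Gauss formula linking the ambient and the intrinsic Hessians on $N$: applying it causes the two $A$-dependent terms to cancel and leaves $-F\,\mathrm{Hess}_N(f^*)(\nabla u,\nabla u)\le 0$ by convexity of $f^*$ on $\Omega$. Since $u$ is smooth on $M\times[0,T)$ and $F\ge\varepsilon^{(p-2)/2}>0$, the scalar operator acting on $\phi$ is strictly elliptic with smooth coefficients. Applied on $M\times[0,T']$ for each $T'<T$, the classical parabolic maximum principle on the compact manifold $M$ then yields $\phi(\cdot,t)\le\max_{M}f^*\circ u_0<a$, i.e.\ $u(x,t)\in\Omega$.

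For the energy identity \eqref{23}, I would differentiate $E_{p,\varepsilon}(u(\cdot,t))$ under the integral sign and integrate by parts (legitimate by compactness of $M$ and smoothness of $u$) to obtain
\begin{equation*}
\frac{d}{dt}E_{p,\varepsilon}(u) = -\int_M\langle \Delta_{p,\varepsilon}u,\partial_t u\rangle\,dx.
\end{equation*}
Substituting \eqref{21} produces $-\int_M|\partial_t u|^2\,dx$ plus a second integral in which $A(u)(\nabla u,\nabla u)$ is paired with $\partial_t u$. That second term vanishes pointwise because $t\mapsto u(x,t)$ is a curve in $N$, so $\partial_t u(x,t)$ is tangent to $N$ at $u(x,t)$, whereas $A(u)(\nabla u,\nabla u)$ is normal-valued; their Euclidean inner product is therefore zero.

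The main obstacle, and really the only non-routine step, is the Hessian cancellation in the first part. This is precisely where the convexity hypothesis on $f^*$ is consumed, and where one must track the sign convention for the second fundamental form carefully so that the intrinsic Hessian emerges with the correct sign. Once that algebraic identity is in place, the remainder reduces to the parabolic maximum principle on $[0,T']\Subset[0,T)$ and a straightforward integration by parts on the compact manifold $M$.
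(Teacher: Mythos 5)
Your treatment of the energy identity \eqref{23} coincides with the paper's: pair the equation with $\partial_t u$, integrate by parts on the compact $M$, and kill the curvature term because $\partial_t u$ is tangent to $N$ while $A(u)(\nabla u,\nabla u)$ is normal-valued. Your differential inequality for $\phi=f^*\circ u$ is also essentially the paper's computation, reorganised: the paper pairs the equation directly with the intrinsic gradient $(\nabla f^*)\circ u$, which is tangent to $N$ and hence orthogonal to $A(u)(\nabla u,\nabla u)$ with no sign bookkeeping, and the intrinsic Hessian of $f^*$ then appears immediately; your ambient-extension route with the Gauss formula reaches the same inequality but makes you carry the second-fundamental-form convention through the cancellation. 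Both are fine.

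The genuine gap is in how you invoke the maximum principle. The inequality $\partial_t\phi-\mathrm{div}\bigl((|\nabla u|^2+\varepsilon)^{\frac{p-2}{2}}\nabla\phi\bigr)\le 0$ uses the convexity of $f^*$ \emph{at the point} $u(x,t)$, and $f^*$ is only assumed convex on $\Omega$. It is precisely the membership $u(x,t)\in\Omega$ for $t>0$ that you are trying to prove, so applying the maximum principle on all of $M\times[0,T']$ presupposes the conclusion: the argument as written is circular. The paper closes this loop with a continuity argument: set $T_1=\sup\{t\in[0,T):\ u(M\times[0,t])\subset\Omega\}$, note $T_1>0$ by continuity of $u$ and compactness of $M$, derive the differential inequality only on $M\times[0,T_1)$ where convexity is available, conclude $\max_M f^*(u(\cdot,T_1))\le\max_M f^*(u_0)<a$ by the maximum principle and continuity, and then use continuity of $f^*\circ u$ once more to find $\alpha>0$ with $u(M\times[0,T_1+\alpha])\subset\Omega$, contradicting the definition of $T_1$ unless $T_1=T$. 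You need this (or an equivalent open--closed argument in $t$); with it inserted, the rest of your proof goes through.
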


\medskip

\begin{proof}  Since $\Omega$ is a generalised regular ball, then there exist a positive function $f^* \in C^2(N)$ which is convex on  $\Omega$  and $a>0$ such that $\Omega=(f^*)^{-1}([0,a))$.  Let 
$$T_1 = \sup \big\{ t \in [0, T) \ : \  u\left(M\times[0,t]\right) \subset \Omega \big\} $$
and suppose by contradiction that $T_1 < T$.  Since $u_0(M) \subset \Omega$ and $M$ is compact, then by continuity of $u$ we have  that $ T_1 >0$. Then we compute on $M \times [0, T_1)$ 
\begin{align*}
    \partial_t(f^*\circ u)-\hbox{div}\left((|\nabla u|^2+\epsilon)^{p-2\over 2}\nabla f^*\circ u\right)&=\Big\langle(\nabla f^*)\circ u, \left(\partial_t u- \Delta_{p,\epsilon}u\right)\Big\rangle\\
    &- (\nabla u|^2+\epsilon)^{p-2\over 2}(\nabla^2f^*)\circ u \big(\nabla u,\nabla u\big)\\
    &=  \left(|\nabla u|^2+\varepsilon\right)^\frac{p-2}{2}\big\langle (\nabla f^*)\circ u, A(u)(\nabla u,\nabla u)\big\rangle\\
    &-\left(|\nabla u|^2+\varepsilon\right)^\frac{p-2}{2}(\nabla^2f^*)\circ u(\nabla u,\nabla u), 
\end{align*}
which implies, since $\nabla f^*$ is orthogonal to $A(u)(\nabla u,\nabla u)$ and $f^*$ is convex on $\Omega$,  that 
\begin{equation*}
\partial_t(f^*(u))- \hbox{div}  \left(\left(|\nabla u|^2+\varepsilon\right)^{\frac{p-2}{2}} \nabla f^*(u) \right) \le 0.
\end{equation*} 
 Hence it follows  from the maximum principle for parabolic equations that for all $t\in[0,T_1)$, we have
$$\max_{x\in M} f^*(u(x,t))\leq \max_{x\in M} f^*(u_0(x))$$
which implies  by continuity of $f^*$  and $u$  that 
$$\max_{x\in M} f^*(u(x,T_1))\leq \max_{x\in M} f^*(u_0(x)).$$
Since $u_0(M)\subset\Omega$, then $ \displaystyle  \max_{x\in M}f^*(u_0(x))<a$, so 
$$ \max_{x\in M}f^*(u(x,T_1))< a. $$
 It follows  by continuity  of $f^*(u)$ that there exists $\alpha >0$ such  $\displaystyle  \max_{x\in M}f^*(u(x,t))< a$ for all  $t \in [0, T_1+ \alpha]$, that is, $\displaystyle u(M, t) \subset \Omega $ for all  $t \in [0, T_1 + \alpha]$ contradicting the definition of $T_1$. 

Now to prove \eqref{23} it sufficies to  take the inner product (in $\R^L$) of equation \eqref{21} with $\partial_tu$ and integrate on $M$ to get 
$$\int_M|\partial_t u(x,t)|^2  \ dx = \int_M\left(|\nabla u|^2+\varepsilon\right)^\frac{p-2}{2}\big\langle \partial_tu, A(u)(\nabla u,\nabla u)\big\rangle dx  -  {d \over dt} E_{p,\varepsilon}\big(u( . , t)\big).$$
This achieves the proof of the proposition since $\big\langle \partial_tu, A(u)(\nabla u,\nabla u)\big\rangle = 0$.

\end{proof}

\bigskip

In order to prove uniform gradient estimates on the solution of the regularised equation \eqref{21} we need  a Bochner-type formula on $u$. To this end let us introduce the following notations. We set for all  $0< \varepsilon< 1$: 
$$F = |\nabla u|^2 + \varepsilon , $$
and let $L_p$ be the operator defined  for $\varphi \in C^{2}(M)$ by 
\begin{equation}\label{24}
   L_p(\varphi) = \hbox{div}\left(F^{p-2\over 2} \nabla\varphi\right).
     \end{equation}
We define the symmetric  contravariant 2-tensor $B$  on  $M$ by setting in local coordinates on $M$ :
\begin{equation}\label{25}
  B_{ij}= {\langle \partial_ku, \partial_lu \rangle \over F}g^{ik}g^{lj}.
     \end{equation}
One checks immediately that  for any   $x \in M$ and  covectors $X, Y \in T^*_xM$, we have 
 \begin{equation} \label{250}
 \begin{cases}
  B(X, X) \ge 0   \cr
  
  B(X, Y) \le |X| |Y|.
  \end{cases}
 \end{equation} 
 
 \bigskip
 
If  $X= X_i dx^i \in T^*_xM$,  we denote by  $B(X, .)$  the vector in $ T_xM$ defined by $B(X, .)= B_{ij}X_i \partial_j$. 
 
 \medskip

  \noindent Then we have the following Bochner-type formula 
  
\begin{equation}\label{26}
    \begin{split}
        \partial_t F- L_{p}F&=  (p-2) \hbox{div}\left(F^{p-2\over 2} B(dF, .)\right) -2F^{\frac{p-2}{2}}|\nabla^2 u|^2-{(p-2)\over 2}F^{\frac{p-4}{2}}|\nabla F|^2  \\
       & - 2F^{\frac{p-2}{2}}\hbox{Ric}^M \big(\nabla u , \nabla u\big)
       +2 F^{\frac{p-2}{2}}\big\langle \hbox{Riem}^N(\nabla u ,\nabla u )\nabla u, \nabla u\big\rangle, 
    \end{split}
\end{equation}
\medskip

\noindent  where  $\hbox{Ric}^M$ is the Ricci tensor  of  $M$  and $\hbox{Riem}^N$ is the Riemann curvature tensor of $N$ with the following notations  in an orthonormal frame $\{ e_1 , \cdots, e_m \}$ of  $T_xM$  :

$$\hbox{Ric}^M \big(\nabla u , \nabla u\big)= \sum_{k=1}^L\sum_{i=1}^m\hbox{Ric}^M \big(\nabla_{e_i} u^k ,  \nabla_{e_i} u^k\big) $$
and 
$$ \big\langle \hbox{Riem}^N(\nabla u ,\nabla u )\nabla u, \nabla u\big\rangle = \sum_{i,j=1}^m
 \big\langle \hbox{Riem}^N(\nabla_{e_i} u ,\nabla_{e_j} u )\nabla _{e_i}u, \nabla_{e_j} u\big\rangle $$

\bigskip


\section{Gradient Estimates}\label{s3}

\medskip

In this section, we derive uniform gradient estimates on  the solution  $u$ of the regularised equation \eqref{21}. We first need the following usefull inequality.

\medskip

\begin{prop}\label{p31}
 Let $(M^m,g)$,  $(N^n,h)$ be two  Riemannian manifolds and let  $\Omega \subset N$  be a $\delta$-regular set.  Let  $u :  M\times [0, T) \to N$  be a smooth solution of \eqref{21}having its image in  $\Omega$  and  set 
$$\varphi(x,t)=\frac{F(x,t)}{f^2(u(x,t))},$$ 
 where $F(x,t)= |\nabla u(x,t)|^2 + \varepsilon$ and $f$ is the function satisfying condition \eqref{c1}.  Then we have  at any point $(x,t) \in  M\times [0, T)$ 
\begin{equation}\label{30}
\begin{split}
    & \partial_t\varphi-L_{p}\varphi   \leq  (p-2)\hbox{div}\left(F^{p-2 \over 2} B(d\varphi, .) \right)   - {1\over 40} F^{{p-4\over 2}}(f\circ u)^2|\nabla \varphi|^2  \\
     &- 2(\delta-\delta_p) {F^{{p\over 2}}\over (f\circ u)^4}\left|(\nabla f)\circ u \right|^2 |\nabla u|^2 + \  2 K_1 {F^{{p-2\over 2}}\over (f\circ u)^2} |\nabla u|^2, 
     \end{split}
\end{equation}
where $\delta_p =   3(p-2)^2\Big( \sqrt{m} + 2p+ 2  \Big)^2 + 3$   \ and \  $-K_1 \le 0$  is a lower bound of the Ricci curvature of $M$ at $x$.  The operator $L_p$ and the tensor $B$ are defined by \eqref{24} and \eqref{25} above. 
\end{prop}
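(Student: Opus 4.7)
The plan is to start from the Bochner-type formula \eqref{26} and exploit the $\delta$-regularity condition \eqref{c1} to cancel the positive sectional curvature contribution of $N$ against the Hessian of $f$. Writing $g:=f\circ u$ so that $\varphi=F/g^2$, a direct chain-rule computation yields the algebraic identity
\begin{equation*}
\partial_t\varphi - L_p\varphi = \frac{1}{g^2}\bigl(\partial_t F - L_p F\bigr) + \frac{2F}{g^3}\bigl(L_p g - \partial_t g\bigr) + \frac{4F^{(p-2)/2}}{g^3}\nabla F\cdot\nabla g - \frac{6F^{p/2}}{g^4}|\nabla g|^2.
\end{equation*}

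Step 1 is to compute $\partial_t g - L_p g$ on its own. Combining the chain-rule identity $\Delta_M(f\circ u)=\nabla^2 f(\nabla u,\nabla u)+df(\Delta_M u)$ with the regularised heat flow \eqref{21}, the orthogonality $df(A(u)(\nabla u,\nabla u))=0$ (since $\nabla f(u)\in T_uN$), and the expansion of $L_p g$, all the $\nabla F$-cross terms cancel and I expect the clean identity
\begin{equation*}
\partial_t g - L_p g = -F^{(p-2)/2}\nabla^2 f(\nabla u,\nabla u).
\end{equation*}
Step 2 substitutes this together with \eqref{26} into the master identity and invokes \eqref{c1} in the form $-\nabla^2 f(\nabla u,\nabla u)\ge K_2(u)f|\nabla u|^2+\delta\tfrac{|\nabla f|^2}{f}|\nabla u|^2$ alongside the sectional curvature upper bound $\langle\hbox{Riem}^N(\nabla u,\nabla u)\nabla u,\nabla u\rangle\le K_2(u)|\nabla u|^4\le K_2(u)F|\nabla u|^2$. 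The two $K_2$ contributions then cancel exactly, leaving the bad term $-2\delta F^{p/2}|\nabla f|^2|\nabla u|^2/g^4$; the Ricci lower bound produces the announced $+2K_1F^{(p-2)/2}|\nabla u|^2/g^2$.

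Step 3 converts everything into $\nabla\varphi$ via the relation $\nabla F = g^2\nabla\varphi + 2\varphi g\nabla g$ (from $F=\varphi g^2$). A divergence manipulation rewrites $\tfrac{p-2}{g^2}\hbox{div}(F^{(p-2)/2}B(dF,\cdot))$ as $(p-2)\hbox{div}(F^{(p-2)/2}B(d\varphi,\cdot))$ plus $B$-contracted cross-term corrections of order $\nabla g$. I would also strengthen the Bochner reserve $-2F^{(p-2)/2}|\nabla^2 u|^2-\tfrac{p-2}{2}F^{(p-4)/2}|\nabla F|^2$ using Kato's inequality $|\nabla^2 u|^2\ge|\nabla F|^2/(4F)$ to $-\tfrac{p-1}{2}F^{(p-4)/2}|\nabla F|^2$, then expand $|\nabla F|^2$ as a quadratic form in $\nabla\varphi$ and $\nabla g$. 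All resulting indefinite cross terms (scalar $\nabla\varphi\cdot\nabla g$ from the Bochner expansion and $B$-contracted $B(d\varphi,\nabla g)$ from the divergence correction) are split by Young's inequality: a fixed share is absorbed into the good term $-\tfrac{1}{40}F^{(p-4)/2}g^2|\nabla\varphi|^2$, while the residual positive $F^{p/2}|\nabla g|^2/g^4$ is controlled via $|\nabla g|^2\le|(\nabla f)(u)|^2|\nabla u|^2$ and absorbed into the bad term, shifting its coefficient to $-2(\delta-\delta_p)$.

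The main obstacle will be the simultaneous constant-tracking in Step 3: the cross terms come in several distinct algebraic types (the free scalar term $\tfrac{4F^{(p-2)/2}}{g^3}\nabla F\cdot\nabla g$, the $\nabla\varphi\cdot\nabla g$ residues from expanding $|\nabla F|^2$, and the $B$-contracted corrections from the divergence), and the Young weights must be tuned jointly so that both the coefficient $-\tfrac{1}{40}$ of $F^{(p-4)/2}g^2|\nabla\varphi|^2$ and the final coefficient $-2(\delta-\delta_p)$ of the bad term are achieved with the explicit $\delta_p=3(p-2)^2(\sqrt m+2p+2)^2+3$. The $(p-2)^2$ factor tracks the squared prefactor of the $B$-correction to the divergence, the $\sqrt m$ enters through the trace and operator bounds on $B$ when contracted against a generic vector in the local frame, and the additive $+3$ absorbs the Young split of the scalar cross term inherited from the chain-rule identity.
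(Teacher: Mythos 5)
Your proposal follows essentially the same route as the paper's proof: the identical chain-rule decomposition of $\partial_t\varphi-L_p\varphi$, the identity $\partial_t(f\circ u)-L_p(f\circ u)=-F^{(p-2)/2}(\nabla^2f)\circ u(\nabla u,\nabla u)$ via orthogonality of the tension field to $T_uN$, the exact cancellation of the $K_2$ contributions through condition \eqref{c1}, the rewriting of $\mathrm{div}(F^{(p-2)/2}B(dF,\cdot))$ in terms of $B(d\varphi,\cdot)$ plus cross terms controlled by Cauchy--Schwarz and Young, the absorption of $|\nabla^2u|^2$ into $|\nabla F|^2$ via $|\nabla F|^2\le 4|\nabla^2u|^2|\nabla u|^2$, and the final conversion $|\nabla F|^2\ge\tfrac12(f\circ u)^4|\nabla\varphi|^2-4F(f\circ u)^{-2}|\nabla(f\circ u)|^2$. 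Your diagnosis of where the factors $(p-2)^2$, $\sqrt m$, and the additive constant in $\delta_p$ arise also matches the paper's bookkeeping, so the plan is sound and coincides with the published argument.
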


\begin{proof}
Fix a point $ x_0  \in M$, then  in normal coordinates at   $x_0$, a  basic computation gives,
\begin{equation} \label{31}
\begin{split} 
    \partial_t\varphi-L_{p}\varphi =  \ &  \frac{1}{(f\circ u)^2}\Big(\partial_t F-L_{p} F\Big)- 2 \frac{F}{(f\circ u)^3}\Bigl(\partial_t (f\circ u)-L_{p}(f\circ u)\Bigr) \\
     + & 4\frac{F^{\frac{p-2}{2}}}{(f\circ u)^3} \nabla F\cdot\nabla (f\circ u)  - 6\frac{F^\frac{p}{2}}{(f\circ u)^4} |\nabla(f\circ u)|^2,
        \end{split}
\end{equation}  

\noindent  where the dot  \ $\cdot$ \  denotes the Riemannian  inner product on $M$.  By  using Bochner's formula \eqref{26},  the first  term in the right hand side of \eqref{31} can be bounded as : 

\begin{equation} \label{32}
\begin{split} 
 &\frac{1}{(f\circ u)^2}\Big(\partial_t F-L_{p} F\Big)  \leq   {(p-2)\over (f\circ u)^2}  \hbox{div}\left(F^{p-2 \over 2} B(dF, .) \right)    +  2K_1 {F^{\frac{p-2}{2}}|\nabla u|^2\over (f\circ u)^2}   \\
 &   +  2K_2{F^{\frac{p}{2}}|\nabla u|^2 \over (f\circ u)^2} -2{ F^{\frac{p-2}{2}}\over (f\circ u)^2} |\nabla^2 u|^{2} - {1\over2}(p-2 ){F^{\frac{p-4}{2}}\over (f\circ u)^2} \left|\nabla F \right|^2, 
\end{split}
\end{equation}
where  $ - K_1 \le 0 $ is a lower bound of the Ricci curvature of $M$, and   $K_2 \ge 0 $ is an upper bound of the sectional curvature of $N$.   To bound the second term in the right hand side of \eqref{31}, a direct computation gives 

\begin{equation*}
    \begin{split}
  \partial_t(f\circ u)-L_{p}(f\circ u)=\Big\langle(\nabla f)\circ u, \left(\partial_t u- L_{p}u\right)\Big\rangle - F^{\frac{p-2}{2}}(\nabla^2f)\circ u \big(\nabla u,\nabla u\big)
     \end{split}
    \end{equation*}
 where  in local coordinates : 
 $$ (\nabla^2f)\circ u \big(\nabla u,\nabla u\big)= \sum_{i,j=1}^mg^{ij}(\nabla^2f)\circ u \big(\partial_iu,\partial_ju\big) $$
 and where $\langle \cdot , \cdot \rangle $ denotes the inner product of $\mathbb{R}^L$ (we recall here that $N$ is isometrically embedded in $\mathbb{R}^L$).  Since  $\big\langle(\nabla f)\circ u, \left(\partial_t u- L_{p}u\right)\big\rangle  = 0 $ by equation \eqref{21}  ( $\partial_t u-\Delta_{p}u$ being  orthogonal to $T_{u}N$ and      
     $(\nabla f)\circ u \in T_{u}N$), then we obtain  
     
    \begin{equation} \label{33}
  \partial_t(f\circ u)-L_{p}(f\circ u)    =-F^{\frac{p-2}{2}}(\nabla^2f)\circ u \big(\nabla u,\nabla u\big).
    \end{equation}
    
    \medskip
    
\noindent Substituting \eqref{33}  and \eqref{32} in \eqref{31} gives

\begin{equation} \label{34}
    \begin{split}
   \partial_t\varphi-L_{p}\varphi & \leq  {(p-2)\over (f\circ u)^2}  \hbox{div}\left(F^{p-2 \over 2} B(dF, .) \right)
   -2{ F^{\frac{p-2}{2}}\over (f\circ u)^2} |\nabla^2 u|^{2}- {1\over2}(p-2 ){F^{\frac{p-4}{2}}\over (f\circ u)^2} \left|\nabla F \right|^2\\
   &+ 2K_1 {F^{\frac{p-2}{2}} \over (f\circ u)^2} |\nabla u|^2  
  +  2  \frac{F^{\frac{p}{2}}}{(f\circ u)^3}\bigg(K_2(f\circ u) |\nabla u|^2 +(\nabla^2f)\circ u \big(\nabla u,\nabla u\big)\bigg) \\
      &  + 4\frac{F^{\frac{p-2}{2}}}{(f\circ u)^3} \nabla F\cdot\nabla(f\circ u)  - 6\frac{F^\frac{p}{2}}{(f\circ u)^4} |\nabla(f\circ u)|^2 .
    \end{split} 
    \end{equation}   
    
Since $f$ satisfies condition \eqref{c1}, then we have

\begin{equation*}
K_2 (f\circ u) |\nabla u|^2 +(\nabla^2f)\circ u \big(\nabla u ,\nabla u\big)\le  -\delta{ \big|(\nabla f)\circ u\big|^2  \over (f\circ u)}|\nabla u|^2,
\end{equation*}
\noindent so it follows from \eqref{34} 
 
\begin{equation} \label{35}
  \begin{split}
   \partial_t\varphi-L_{p}\varphi & \leq  {(p-2)\over (f\circ u)^2}  \hbox{div}\left(F^{p-2 \over 2}  B(dF, .) \right)  -2{ F^{\frac{p-2}{2}}\over (f\circ u)^2} |\nabla^2 u|^{2}- {1\over2}(p-2 ){F^{\frac{p-4}{2}}\over (f\circ u)^2} \left|\nabla F \right|^2   \\
   & + 2K_1 {F^{\frac{p-2}{2}} \over (f\circ u)^2}|\nabla u|^2   -2\delta  \frac{F^{\frac{p}{2}}}{(f\circ u)^4}\left|(\nabla f)\circ u\right|^2 |\nabla u|^2 \\
   &  + 4\frac{F^{\frac{p-2}{2}}}{(f\circ u)^3} \nabla F\cdot\nabla(f\circ u)  - 6\frac{F^\frac{p}{2}}{(f\circ u)^4} \left|(\nabla f)\circ u\right|^2 .
    \end{split}
\end{equation}

To estimate the first term  in the right hand side of  \eqref{35} we compute, by using the fact  that $F= (f\circ u)^2\varphi$ and that we are working in normal coordinates at $x_0$, 

\begin{equation} \label{36}
\begin{split}
{1\over (f\circ u)^2}\hbox{div}\left(F^{p-2 \over 2}  B(dF, .) \right)  &=  \hbox{div}\left(F^{p-2 \over 2}  B(d\varphi, .) \right) + 2 \hbox{div}\left({F^{p \over 2}\over (f\circ u)^3}  B(d(f\circ u), .) \right) \\
& + 2 {F^{p-2 \over 2}\over (f\circ u)^3} B\left(dF, d(f\circ u)\right)  \\
& =  \hbox{div}\left(F^{p-2 \over 2} B( d\varphi, . )\right) +  2\partial_i\bigg({F^{\frac{p-2}{2}}\over (f\circ u)^3} \langle\partial_i u ,\partial_ju\rangle  \partial_j(f\circ u) \bigg) \\
& + 4 {F^{\frac{p-4}{2}}\over (f\circ u)^3} \langle\partial_i u ,\partial_ju\rangle \partial_iF \partial_j(f\circ u) \\
&=  \hbox{div}\left(F^{p-2 \over 2} B(d\varphi, . )\right) + (p+2){F^{\frac{p-4}{2}}\over (f\circ u)^3} \langle\partial_i u ,\partial_ju\rangle \partial_iF \partial_j(f\circ u) \\
&+ 2 {F^{\frac{p-2}{2}}\over (f\circ u)^3}\langle\partial_{ii}^2u ,\partial_ju\rangle  \partial_j(f\circ u)+ 2 {F^{\frac{p-2}{2}}\over (f\circ u)^3}\langle\partial_{i}u ,\partial_{ij}^2u\rangle  \partial_j(f\circ u)  \\
& +   2 {F^{\frac{p-2}{2}}\over (f\circ u)^3} \langle\partial_{i} u ,\partial_ju\rangle (\nabla^2 f)\circ u \left(\partial_i u  ,\partial_ju\right) \\
&+  2 {F^{\frac{p-2}{2}}\over (f\circ u)^3} \langle\partial_{i}u ,\partial_{j}u\rangle \langle(\nabla f)\circ u ,\partial_{ij}^2u\rangle \\
& - 6 {F^{\frac{p-2}{2}}\over (f\circ u)^4}\langle\partial_i u ,\partial_ju\rangle   \partial_i(f\circ u)\partial_j(f\circ u)
\end{split}
\end{equation} 

\medskip

 Since by condition \eqref{c1} $f$ is concave, then we have 
 \begin{equation} \label{37}
 \langle\partial_{i} u ,\partial_ju\rangle (\nabla^2 f)\circ u \left(\partial_i u  ,\partial_ju\right) \le 0.
 \end{equation} 
 
 To bound the other terms in \eqref{36} observe that the last term is nonpositive  and the other terms can be bounded by using  the Cauchy-Schwarz inequality. So we obtain from \eqref{36}  and \eqref{37}  
 
 \begin{equation*}
 \begin{split}
 &{1\over (f\circ u)^2}\hbox{div}\left(F^{p-2 \over 2} B(dF, .) \right)     \le  \   \hbox{div}\left(F^{p-2 \over 2} B(d\varphi, .) \right)  +(p+2) {F^{p-4\over2} \over (f\circ u)^3} \left|\nabla(f\circ u)\right| |\nabla u|^2 \left|\nabla F \right|  \\
& + 2 \left(\sqrt{m} + 1\right) {F^{p-2\over 2}\over (f\circ u)^3} \left|\nabla(f\circ u)\right| |\nabla u| \left|\nabla^2u\right|+ 2 {F^{p-2\over 2}\over (f\circ u)^3} \left|(\nabla f)\circ u\right| |\nabla u|^2 \left|\nabla^2u\right|
 \end{split}
 \end{equation*} 

\medskip

\noindent which gives,   by Young's inequality for $\alpha > 0$ (to be chosen later),

\begin{equation}\label{38}
    \begin{split}
&{(p-2)\over (f\circ u)^2} \hbox{div}\left(F^{p-2 \over 2} B(dF, . )\right)    \le (p-2)  \hbox{div}\left(F^{p-2 \over 2} B(d\varphi, . )\right)+ {\alpha\over 2}(p+2)(p-2) {F^{{p-4\over 2}} \over (f\circ u)^2}\left|\nabla F \right|^2   \\
&   +    {1\over 2\alpha}(p+2)(p-2)\frac{F^{{p\over 2}}}{(f\circ u)^4} \left|\nabla(f\circ u)\right|^2    
 +  \alpha(p-2) \left(\sqrt{m} + 2\right)  {F^{{p-2\over 2}}\over (f\circ u)^2} \left|\nabla^2u\right|^2\\
 &+ {1\over \alpha}(p-2) \left(\sqrt{m} + 1\right) {F^{{p\over 2}} \over (f\circ u)^4} \left|\nabla(f\circ u)\right|^2  
+ {p-2\over \alpha}{ F^{{p\over 2}} \over (f\circ u)^4} |(\nabla f)\circ u|^2|\nabla u|^2 ,
 \end{split} 
\end{equation}
where we have used the fact that $F= |\nabla u|^2 + \varepsilon \ge |\nabla u|^2$. 

\medskip

Now, to bound the terms \  $ 4\dfrac{F^{\frac{p-2}{2}}}{(f\circ u)^3} \nabla F\cdot\nabla(f\circ u)  - 6\dfrac{F^\frac{p}{2}}{(f\circ u)^4} \left|\nabla(f\circ u)\right|^2$ in \eqref{35},    we use again  Young's inequality  for any $\beta >0$ : 

 \begin{equation}  
 \begin{split} \label{39}
4\frac{F^{\frac{p-2}{2}}}{(f\circ u)^3} \nabla F\cdot\nabla(f\circ u)  - 6\frac{F^\frac{p}{2}}{(f\circ u)^4} \left|\nabla(f\circ u)\right|^2   \le   2\beta \frac{F^{\frac{p-4}{2}}}{(f\circ u)^2} |\nabla F|^2    + \left({2 \over \beta} - 6 \right)\frac{F^\frac{p}{2}}{(f\circ u)^4} \left|\nabla(f\circ u)\right|^2. 
  \end{split}
 \end{equation}

By combining \eqref{35}, \eqref{38} and \eqref{39},  we  obtain  
\begin{equation} \label{040}
\begin{split}
 & \partial_t\varphi-L_{p}\varphi  \leq (p-2)\hbox{div}\left(F^{p-2 \over 2} B(d\varphi , .) \right)  +  \Big(-2 +   \alpha(p-2)\left(\sqrt{m} +2\right)\Big)  \frac{F^{\frac{p-2}{2}}}{(f\circ u)^2}|\nabla^2 u|^2 \\
  &+ \left(-{1\over 2}(p-2) + {\alpha \over 2}(p+2)(p-2)  + 2\beta \right) {F^{{p-4\over 2}} \over (f\circ u)^2}|\nabla F|^2  +  2 K_1{F^{p-2\over2}  \over (f\circ u)^2} |\nabla u|^2 \\
  & + \left( -6 +{2\over \beta} +      {1\over \alpha}(p-2) \left({p\over 2} +  \sqrt{m} +2 \right)\right) {F^{{p\over 2}}  \over (f\circ u)^4} \left|\nabla(f\circ u)\right|^2 \\
  & +  \left(  -2\delta + {p-2\over \alpha} \right) {F^{{p\over 2}} \over (f\circ u)^4} |(\nabla f)\circ u|^2 |\nabla u|^2.
  \end{split}
\end{equation} 

\medskip

 Observe that $|\nabla F|^2= |\nabla(|\nabla u|^2)|^2 \le 4 |\nabla^2u|^2|\nabla u|^2$. Hence if we  choose $\beta = {p-1 \over 5}$ and fix $\alpha >0$ such that  
 
 \begin{equation} \label{041}
  -2 +  \alpha(p-2)\left(\sqrt{m} +2\right) \le 0,
 \end{equation}
\noindent  then it follows from \eqref{040} that 

\begin{equation}\label{042}
\begin{split}
 & \partial_t\varphi-L_{p}\varphi \leq  (p-2)\hbox{div}\left(F^{p-2 \over 2} B(d\varphi, . ) \right) +  2K_1 {F^{p-2\over2} \over (f\circ u)^2}|\nabla u|^2  \\
 & - \left( {p-1\over 10}  -  {1\over 4}\alpha(p-2)\Bigl(\sqrt{m} + 2p+6\Bigr)\right)  \frac{F^{\frac{p}{2}-2}}{(f\circ u)^2}|\nabla F|^2  \\
    &  \left( - 6 + {10 \over p-1} + {1\over \alpha}(p-2) \left({p\over 2}+  \sqrt{m}  + 2 \right)\right){F^{{p\over 2}} \over (f\circ u)^4}\left|\nabla(f\circ u)\right|^2  \\
      & +  \left(  -2\delta + {p-2\over \alpha} \right) {F^{{p\over 2}} \over (f\circ u)^4} |(\nabla f)\circ u|^2 |\nabla u|^2.
  \end{split}
\end{equation} 

\medskip

If we choose 
$$ \alpha = \begin{cases}  {1 \over 5(p-2)\left(\sqrt{m}+ 2p+2 \right)} \   \  \hbox{if} \  p>2 \cr
1 \hspace{2.6cm} \hbox{if} \ p= 2,
\end{cases}$$
 then it is clear that \eqref{041} is satisfied,   and observe that $\left|\nabla(f\circ u)\right| \le \left|(\nabla f)\circ u\right| |\nabla u|$, so it follows from \eqref{042} that 
\begin{equation}\label{043}
\begin{split}
 & \partial_t\varphi-L_{p}\varphi  \leq  (p-2)\hbox{div}\left(F^{p-2 \over 2} B(d \varphi, . )\right)      +   2K_1 {F^{p-2\over2} \over (f\circ u)^2}|\nabla u|^2   \\
  &  -{1\over 20}  \frac{F^{\frac{p-4}{2}}}{(f\circ u)^2}|\nabla F|^2 + (-2\delta + c_p)  {F^{{p\over 2}} \over (f\circ u)^4}\left|(\nabla f)\circ u\right|^2|\nabla u|^2  ,
   \end{split}
\end{equation} 
where 
$$  c_p =  -6 +{10 \over p-1} +  5 (p-2)^2 \left(\sqrt{m} +2p+6\right) \left( \sqrt{m} + {p\over 2} + 3\right).$$

\medskip

On the other hand we have $$\partial_iF=(f\circ u)^2 \partial_i\phi  + 2{\big\langle (\nabla f)\circ u , \partial_i u\big\rangle  \over f\circ u}, $$
so 
 $$|\nabla F|^2 \ge {1\over 2} (f\circ u)^4|\nabla\phi|^2 - 4 {F \over (f\circ u)^2}|\nabla(f\circ u) |^2.$$
 Hence it follows from \eqref{043}
 \begin{equation*} 
\begin{split}
  &\partial_t\varphi-L_{p}\varphi  \leq   (p-2)\hbox{div}\left(F^{p-2 \over 2} B(d\varphi, .) \right)   -{1\over 40}  F^{p-4 \over 2}(f\circ u)^2 |\nabla \varphi|^2  \\
  &+ (-2\delta + c_p +{1\over 5})  {F^{{p\over 2}} \over (f\circ u)^4}|(\nabla f)\circ u|^2|\nabla u|^2  + 2K_1 {F^{p-2\over2} \over (f\circ u)^2}|\nabla u|^2,
   \end{split}
\end{equation*} 
This proves the proposition since  ${1\over 2}c_p + {1\over 10}  \le \delta_p :=   3(p-2)^2\Big( \sqrt{m} + 2p + 6  \Big)^2 + 3$.

\end{proof}

\bigskip

Proposition \ref{p31} allows to prove the following important gradient estimates :

\medskip

\begin{prop}\label{p32} Let $(M^m,g)$  and $(N^n,h)$ be two complete Riemannian manifolds. Let $u : M \times [0, T) \to \Omega$ a smooth solution of \eqref{21}, where  $\Omega$ is a  $\delta$-regular set with 
$$\delta > \delta_p := 3(p-2)^2\left(\sqrt{m}+ 2p+2\right)^2 + 3. $$
Let $(x_0, t_0)  \in M\times(0,T)$ and $R>0$. Then if $t_0 >R$, we have 
$$\|\nabla u\|_{L^{\infty}\left(B(x_0,R/2)\times[t_0-R/2, t_0]\right)}  \le C_R\left(\int_{B(x_0, R) \times [t_0- R, t_0]} |\nabla u |^p dx dt + 1\right), $$
and if $t_0 \le R$, we have 
$$\|\nabla u\|_{L^{\infty}\left(B(x_0,R/2)\times[0, t_0]\right)}  \le C_R\left(\int_{B(x_0, R) \times [0,t_0]} |\nabla u|^p dx dt + \|\nabla u_0\|_{L^{\infty}(B(x_0, R))} + 1  \right),$$
where $C_R$ is a positive constant depending on $R,p, M$ and $\Omega$. 
\end{prop}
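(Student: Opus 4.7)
The plan is to apply Moser iteration to the parabolic differential inequality supplied by Proposition \ref{p31}. Because $\delta > \delta_p$, the curvature term $-2(\delta-\delta_p)\frac{F^{p/2}}{(f\circ u)^4}|(\nabla f)\circ u|^2|\nabla u|^2$ is nonpositive and can simply be discarded. Combined with the two-sided bound $C^{-1} \le f\circ u \le C$, which makes $\varphi$ and $F$ mutually comparable, the conclusion of Proposition \ref{p31} can be written schematically as
\begin{equation*}
\partial_t \varphi - L_p\varphi - (p-2)\,\mathrm{div}\!\bigl(F^{\frac{p-2}{2}} B(d\varphi,\cdot)\bigr) \le -\tfrac{1}{40}\,F^{\frac{p-4}{2}}(f\circ u)^2 |\nabla\varphi|^2 + C(K_1, \Omega)\,\varphi^{p/2},
\end{equation*}
where $K_1$ is a lower Ricci bound for $M$ on $B(x_0, R)$. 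The strategy is then a standard Nash--Moser iteration, with the extra complication of tracking the degenerate weight $F^{(p-2)/2}$.

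First, I would fix $q \ge 0$ and test the above inequality against $\eta^2 \varphi^q$, where $\eta$ is a smooth parabolic cutoff supported in a cylinder $Q_k = B(x_0, r_k) \times (t_0 - r_k, t_0]$. Integration by parts of $L_p\varphi$ and of the $B$-divergence term produces coercive quantities of the form $q\!\int\! F^{(p-2)/2}\eta^2\varphi^{q-1}\bigl[|\nabla\varphi|^2+(p-2)B(d\varphi,d\varphi)\bigr]$, and the remaining cross terms involving $\nabla\eta$ and $\partial_t(\eta^2)$ are absorbed via Young's inequality, using the two bounds $B(X,X)\ge 0$ and $|B(X,Y)|\le |X||Y|$ from \eqref{250}. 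The $K_1$-term is absorbed straight into the right-hand side using $\varphi \le C F$. After rewriting $\varphi^{q-1}|\nabla\varphi|^2 = \frac{4}{(q+1)^2}|\nabla\varphi^{(q+1)/2}|^2$, one obtains a Caccioppoli inequality of the form
\begin{equation*}
\sup_{t\in I_k}\!\int\!\eta^2 \varphi^{q+1}\,dx + \int\!\!\int_{Q_k}\!\eta^2\bigl|\nabla\varphi^{(q+1)/2}\bigr|^2\,dx\,dt \le C_R\,(q+1)^{2}\!\int\!\!\int_{Q_{k-1}}\!\!\bigl(|\nabla\eta|^2+|\partial_t\eta|\bigr)\,\varphi^{q+\frac{p}{2}}\,dx\,dt.
\end{equation*}

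Combining this Caccioppoli bound with the local Sobolev inequality on $M$ yields the reverse-Hölder estimate $\|\varphi\|_{L^{\chi s}(Q_{k+1})} \le \bigl[C_R(s{+}1)^{2}(r_k{-}r_{k+1})^{-2}\bigr]^{1/s}\|\varphi\|_{L^{s}(Q_k)}$ for some fixed Sobolev exponent $\chi>1$ depending on $\dim M$ and $p$. Iterating along $s_{k+1} = \chi\, s_k$ with $s_0 = p/2$ and $r_k = R/2 + R/2^{k+1}$, the resulting telescoping product converges, and one lands on the $L^\infty$-bound on $\varphi$ (equivalently on $|\nabla u|$) stated in the proposition. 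The starting $L^{p/2}$ bound on $\varphi$ is provided by the finiteness of $\int\!\!\int|\nabla u|^p$ together with $\varepsilon<1$. In the case $t_0 \le R$ the parabolic cutoff is not allowed to vanish at $t=0$, so the Caccioppoli inequality acquires an additional boundary term of the type $\int_{B(x_0,R)} \varphi^{q+1}(\cdot,0)\,dx \le C\|\nabla u_0\|_{L^\infty(B(x_0,R))}^{2(q+1)}|B(x_0,R)|$, which propagates through the iteration and accounts for the extra $\|\nabla u_0\|_{L^\infty(B(x_0,R))}$ in the stated estimate.

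The main technical obstacle is the consistent bookkeeping of the $F^{(p-2)/2}$ weight throughout the iteration: every integral $\int\varphi^s$ is really paired against $F^{(p-2)/2}\sim\varphi^{(p-2)/2}$, so the effective exponent at each step is shifted by $(p-2)/2$, and one must verify that the resulting geometric sequence still accelerates under Sobolev embedding and that the factors $C_R^{1/s_k}(s_k+1)^{2/s_k}$ remain summable. The positivity $B(X,X)\ge 0$ is what prevents the $(p-2)$-divergence term from ruining the ellipticity of $L_p$, while $|B(X,Y)|\le|X||Y|$ is exactly the bound needed for Young's inequality to close the absorption; these two structural properties of $B$ are what make the Moser scheme go through in the same way as for the scalar degenerate parabolic $p$-Laplace equation.
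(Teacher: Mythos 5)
Your proposal follows essentially the same route as the paper: test the differential inequality of Proposition \ref{p31} against $\phi^2\varphi^{\gamma}$ on shrinking parabolic cylinders, discard the curvature term using $\delta>\delta_p$, use the structural bounds \eqref{250} on $B$ to absorb the divergence terms, derive a Caccioppoli inequality, combine it with the local Sobolev inequality, and run a Moser iteration starting from the $L^{p/2}$ bound on $\varphi$. The exponent shift by $(p-2)/m$ per step and the summability of the iteration constants that you flag as the main bookkeeping issues are exactly what the paper verifies, so the argument is sound and matches the paper's proof.
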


\begin{proof}
Fix $(x_0, t_0) \in M\times (0, T)$ and let $R >0$. In what follows $C_R$ is a positive constant that depends on $R, p, M$ and  $\Omega$  and its value may change from line to line. In this proof we suppose that $ t_0 > R$, as the case $t_0 \le R$ is easier to handle, and therefore, we omit it.  For $ 0 <  r< R$, we   set $Q_r= B(x_0,r)\times (t_0-r,t_0)$ where $B(x_0,r)$ is the geodesic ball of radius $r$. Let  $0 < \rho < r <R$ and let $\phi\in C^{1}_{0}\big(B(x_0,r)\times (t_0-r,\infty)\big)$ such that 
$\phi=1$ on  $Q_{\rho}$ with 
\begin{equation} \label{044}
0\leq \phi\leq 1, \ \ \ \ \ |\nabla\phi|\leq \frac{C_m}{r-\rho},  \ \ \ \ \ |\partial_t\phi|\leq \frac{C_m}{r-\rho},
\end{equation} 
where $C_m$ is a positive constant depending only on  the dimension $m$ of $M$. 

\medskip

As in Proposition \ref{p31}, let $\varphi= \dfrac{F} { f^2(u) } $, where $F = |\nabla u|^2 + \varepsilon$ and $f$ satisfies condition \eqref{c1}.  
If we multiply inequality \eqref{30} by $\varphi^\gamma\phi^2$, where $\gamma\geq 0$, and we integrate on $Q_r$, by using the hypothesis that $\delta \ge \delta_p$,  we get  
\begin{equation} \label{045}
    \begin{aligned}
     &\frac{1}{\gamma+1}\sup_{t\leq t_0}\int_{B(x_0,r)}\varphi^{\gamma+1} \ \phi^2 \ dx-\frac{2}{\gamma+1}\int_{Q_{r}}\varphi^{\gamma+1}\phi\partial_t\phi \ dx dt\\
     &+\gamma \int_{Q_r} f^{p-2}(u)\phi^2\varphi^{\frac{p}{2}+\gamma-2} |\nabla\varphi|^2\ dx dt
     - 2\int_{Q_{r}}f^{p-2}(u)\varphi^{\frac{p}{2}+\gamma-1}\phi |\nabla\phi| |\nabla\varphi| \ dx dt \\
     &\leq -\frac{1}{40}\int_{Q_r}  f^{p-2}(u)\phi^2 \varphi^{\frac{p}{2}+\gamma-2}|\nabla\varphi|^2 \ dx dt+ 2K_R \int_{Q_r}  f^{p-2}(u)\phi^2 \varphi^{\frac{p}{2}+\gamma} \ dx dt\\
      &- (p-2)\gamma \int_{Q_r}f^{p-2}(u)\phi^2 \varphi^{\frac{p}{2}+\gamma-2}B(d\varphi,d\varphi)  \ dx dt  \\
      & -2(p-2)\int_{Q_r} f^{p-2}(u)\phi \varphi^{\frac{p}{2}+\gamma-1} B(d\varphi, d\phi)  \ dx  dt, \\
    \end{aligned}
\end{equation}
where $ - K_R \le 0$ is a lower bound of the Ricci curvature of $M$ on $B(x_0, R)$ and $B$ is given by \eqref{25}. We  have by \eqref{250} that 
\begin{equation} \label{046}
B(d\varphi, d\varphi) \ge 0 \  \  \hbox{and} \  \   \left|B(d\varphi, d\phi)\right| \le \left|\nabla\varphi\right| \left|\nabla\phi\right| .
 \end{equation}

It follows from \eqref{044}, \eqref{045} and \eqref{046} that 
\begin{equation*}
   \begin{aligned} 
 &\frac{1}{\gamma+1}\sup_{t\leq t_0}\int_{B(x_0,r)}\varphi^{\gamma+1} \ \phi^2 \ dx + \left(\gamma + {1\over 40}\right) \int_{Q_r} f^{p-2}(u)\phi^2\varphi^{\frac{p}{2}+\gamma-2} |\nabla\varphi|^2\ dx dt \le \\
 &\frac{2}{(\gamma+1)(r-\rho)}\int_{Q_{r}}\varphi^{\gamma+1}\phi\ dx dt + 2K_R \int_{Q_r}  f^{p-2}(u)\phi^2 \varphi^{\frac{p}{2}+\gamma} \ dx dt \\
 &+ 2(p-1){1 \over r-\rho}\int_{Q_r}f^{p-2}(u)  \varphi^{\frac{p}{2}+\gamma-1}\phi |\nabla\varphi| \ dx  dt
 \end{aligned}
\end{equation*}
which gives,  by applying Young's inequality to the last term, 
\begin{equation}\label{047}
   \begin{aligned}
 &\frac{1}{\gamma+1}\sup_{t\leq t_0}\int_{B(x_0,r)}\varphi^{\gamma+1} \ \phi^2 \ dx + {1\over 2}\left(\gamma + {1\over 40}\right) \int_{Q_r} f^{p-2}(u)\phi^2\varphi^{\frac{p}{2}+\gamma-2} |\nabla\varphi|^2\ dx dt \le \\
 &\frac{2}{(\gamma+1)(r-\rho)}\int_{Q_{r}}\varphi^{\gamma+1}\phi\ dx dt + 2K_R \int_{Q_r}  f^{p-2}(u)\phi^2 \varphi^{\frac{p}{2}+\gamma} \ dx dt  \\
 &2\left(\gamma+ {1\over 40}\right)^{-1}{(p-1)^2 \over (r-\rho)^2}\int_{Q_r}  f^{p-2}(u)\varphi^{\frac{p}{2}+\gamma}  \ dx  dt.
 \end{aligned}
\end{equation}

\medskip

It is easy to see that
\begin{equation}\label{048}
    \begin{split}
        \int_{Q_r} f^{p-2}(u)\phi^2 \varphi^{\gamma+\frac{p}{2}-2} \ |\nabla\varphi|^2 \ dx dt&\geq \frac{8}{(p+2\gamma)^2}\int_{Q_r}|\nabla(\varphi^{\frac{\gamma}{2}+\frac{p}{4}}\phi)|^2f^{p-2}(u) \ dx dt\\
        &-\frac{16}{(p+2\gamma)^2}\int_{Q_r}\varphi^{\gamma+\frac{p}{2}}f^{p-2}(u) |\nabla\phi|^2 \ dx dt. 
    \end{split}
\end{equation}
Thus, if we multiply \eqref{048} by $\gamma+1$ and combine it with \eqref{047} and \eqref{044}, using the fact that $C^{-1} \le f \le C$, we get 
 
\begin{equation}\label{049}
    \begin{aligned}
     &\sup_{t\leq t_0}\int_{B(x_0,r)}\varphi^{\gamma+1}\phi^2 \ dx+\int_{Q_r}|\nabla(\varphi^{\frac{\gamma}{2}+\frac{p}{4}}\phi)|^2 \ dx dt \le \\
     & C_R \left( \left(\gamma+1 + \frac{1}{(r-\rho)^2} \right)\int_{Q_r}\varphi^{\frac{p}{2}+\gamma} \ dx dt +\frac{1}{r-\rho}\int_{Q_r}\varphi^{\gamma+1} dx dt \right). 
    \end{aligned}
\end{equation}

We recall the following Sobolev inequality for $m>2$ and $V\in C^{\infty}_{0}(B(x_0,R))$
$$\bigg(\int_{B(x_0,R)}V^{\frac{2m}{m-2}} \ dx\bigg)^{\frac{m-2}{2m}}\leq C_R\bigg(\int_{B(x_0,R)}|\nabla V|^2 \ dx\bigg)^{\frac{1}{2}}.$$
For $m=2$ we have  for any $s \ge 1$, 
$$\bigg(\int_{B(x_0,R)}V^s dx\bigg)^{1\over s}\leq C_{R,s}\bigg(\int_{B(x_0,R)}|\nabla V|^2 \ dx\bigg)^{\frac{1}{2}}.$$
In this proof we shall consider only the case $m>2$. The case $m=2$ can be  handled in the same way. 
Applying Sobolev  inequality to  $V=\varphi^{\frac{\gamma}{2}+\frac{p}{4}}\phi$  we get
\begin{equation}\label{491}
    \begin{split}
  \int_{t_0-r}^{t_0} \left(\int_{B(x_0,r)}(\varphi^{\frac{\gamma}{2}+\frac{p}{4}} \ \phi)^{\frac{2m}{m-2}} dx  \right)^{m-2\over m}  dt &\leq C_R \int_{Q_r}|\nabla(\varphi^{\frac{\gamma}{2}+\frac{p}{4}}\phi)|^2 \ dx dt.
    \end{split}
\end{equation}

On the other hand if we set  $q_{\gamma}=(1+\frac{2}{m})\gamma+\frac{p}{2}+\frac{2}{m}$, then  by using the fact that $\phi =1$ on $Q_{\rho}$, we infer from Holder's inequality that
\begin{equation}\label{050}
    \begin{split}
        & \int_{Q_{\rho}}\varphi^{q_{\gamma}} \ dx dt=\int_{Q_\rho} \phi^{4\over m}\varphi^{\frac{2\gamma}{m}+\frac{2}{m}} \ \phi^2\varphi^{\gamma+\frac{p}{2}} \ dx dt \leq \\
        &\sup_{t\leq t_0}\bigg(\int_{B(x_0,r)}\varphi^{\gamma+1} \ \phi^2 \ dx\bigg)^{2\over m}  \times \int_{t_0-r}^{t_0}\bigg(\int_{B(x_0,r)}\left(\varphi^{\frac{\gamma}{2}+\frac{p}{4}} \ \phi\right)^{\frac{2m}{m-2}} \ dx\bigg)^{\frac{m-2}{m}} \ dt.
    \end{split}
   \end{equation}

Hence by combining \eqref{049}, \eqref{491} and \eqref{050} we obtain 
\begin{equation}\label{051}
    \begin{split}
  & \int_{Q_{\rho}}\varphi^{q_{\gamma}} \ dx dt  \leq  \\
  & C_R   \left(  \left(\gamma+1 + \frac{1}{(r-\rho)^2} \right)\int_{Q_r}\varphi^{\frac{p}{2}+\gamma} \ dx dt +\frac{1}{r-\rho}\int_{Q_r}\varphi^{\gamma+1} dx dt \right)^{1+ {2\over m}}. 
    \end{split}
\end{equation}
By H\"older's inequality  and Young' inequality we have 
$${1 \over r-\rho}\int_{Q_r} \varphi^{\gamma+1}  \ dx \le  |Q_r| +  (r-\rho)^{-{2\gamma+ p \over 2\gamma + 2}}\int_{Q_r}\varphi^{\frac{p}{2}+\gamma} \ dx dt, $$
where $|Q_r|$ is the volume of $Q_r$. Since $|Q_r| \le |Q_R| \le C_R$, then it follows from \eqref{051} that 
\begin{equation}\label{052}
    \begin{split}
  & \int_{Q_{\rho}}\varphi^{q_{\gamma}} \ dx dt  \leq  \\
  & C_R  \left( \left(\gamma+ 1 + (r-\rho)^{-2} + (r-\rho)^{-{2\gamma+ p \over 2\gamma + 2}} \right)\int_{Q_r}\varphi^{\frac{p}{2}+\gamma} \ dx dt + 1 \right)^{1+ {2\over m}}. 
    \end{split}
\end{equation}

\medskip

We apply now Moser iteration process.  For $j \in \N$,  let $R_{j}=\dfrac{R(1+2^{-j})}{2}$ and $\theta=1+\frac{2}{m}$.  We define $\gamma_{j}=\theta^{j}-1$ and $a_{j}=\gamma_{j}+\frac{p}{2}$. Then we have
$$a_{j+1}=\theta \gamma_{j}+\frac{p}{2}+\frac{2}{m}= \theta a_{j} -{p-2\over m}.$$

If we set $\gamma= \gamma_{j}, \ r= R_j,  \  \rho= R_{j+1}$, then it is easy to check that 
$$\gamma+ 1 + (r-\rho)^{-2} + (r-\rho)^{-{2\gamma+ p \over 2\gamma + 2}} \le C_R \ 4^{pj} .$$

Thus it follows from \eqref{052} that 

$$ \int_{Q_{R_{j+1}}}\varphi^{a_{j+1}} \ dx dt  \leq  C_R \left(4^{pj}  \int_{Q_{R_{j}}}\varphi^{a_{j}} \ dx dt + 1 \right)^{\theta} $$

\medskip
\noindent which gives by setting $\displaystyle I_j =  \left(\int_{Q_{R_{j}}}\varphi^{a_j} \ dx dt + 1\right)^{\theta^{-j}}$, 
$$I_{j+1} \le C_R^{\theta^{-j-1}} 4^{pj \theta^{-j}}I_j.$$
Since $\displaystyle \sum_{j=0}^{\infty} j\theta^{-j} \le C$, then by iterating we get 
\begin{equation}\label{053}
 I_{j+1} \le C_R I_0
\end{equation}
Now observing that 
$$\left(\int_{Q_{R/2}} \varphi^{a_j} dx dt \right)^{1\over a_j} \le I_j^{\theta^j\over a_j}$$
 and using the fact that $\displaystyle \lim_{j\to + \infty} {\theta^j\over a_j}= 1$,  then it follows from \eqref{053} that 
$$ \|\varphi\|_{L^{\infty}(Q_{R/2})}  \le  C_R I_0= C_R \left(\int_{Q_R} \varphi^{p\over 2} dx dt  + 1\right) .$$
This proves the proposition since  $ \varphi = \dfrac{|\nabla u|^2 + \varepsilon} { f(u)}$ and $C^{-1} \le f \le C$.

\end{proof}

\medskip


\section{Global Existence and convergence}\label{s4}

\medskip

In this section we make use of our gradient estimates on the solution of the regularised $p$-harmonic flow  obtained in Section \ref{s3} to prove our main results. 

\bigskip

\begin{proof}[ \textbf{\textit{Proof of theorem \ref{th1}}}]

In this proof $C$ denotes a positive constant depending on $M, p, \Omega$ and the initial datum $u_0$, and whose value may change from line to line.  Let $u_{\varepsilon}$  be the solution of the regularised equation \eqref{21}  and let $[0, T_{\varepsilon})$  be its  maximal  existence interval. Since by Proposition \ref{p21}, $u_{\varepsilon}$ has its image in $\Omega$, then if we apply  Proposition \ref{p32} by taking $R=1$, using the compactness of $M$  and the fact that the $p$-energy functional $E_{p, \varepsilon}$ is nonincreasing along the flow (formula \eqref{23} in Proposition \ref{21}), we get 
\begin{equation} \label{40}
\|\nabla u_{\varepsilon}\|_{L^{\infty}(M\times[0, T_{\varepsilon}))} \le C. 
\end{equation}

\medskip

Suppose by contradiction that $T_{\varepsilon} < + \infty $. Then by integrating formula \eqref{23}  in Proposition \ref{21}, we have 

\begin{equation} \label{400}
\int_0^{T_{\varepsilon}}\int_{M}|\partial_t u_{\varepsilon}(x,t)|^2  dx dt  \le E_{p, \varepsilon}(u_0) \le E_{p, 1}(u_0) .
\end{equation}

On the other hand, we have   for all  $t \in [0, T_{\varepsilon})$ the following bound on the mean value $\overline{u}_{\varepsilon}(t)$ of $u_{\varepsilon}(. , t)$ : 
$$|\overline{u}_{\varepsilon}(t)| = {1\over |M|} \left| \int_{M} u_{\varepsilon}(x,t) dx\right|  \le  |\overline{u}_0| + {1\over |M|}\int_0^{T_{\varepsilon}}\int_M |\partial_tu_{\varepsilon}(x,t)| dx dt, $$
which implies by  using  the Cauchy-Schwarz inequality and \eqref{400} 

\begin{equation}\label{401}
|\overline{u}_{\varepsilon}(t)| \le |\overline{u}_0| + C\sqrt{T_{\varepsilon}} .
\end{equation}

\medskip

We have by the mean-value Theorem that 

\begin{equation}\label{402}
 \sup_{x\in M} |u_{\varepsilon}(x,t) -\overline{u}_{\varepsilon}(t)| \le    \hbox{diam}(M)\|\nabla u_{\varepsilon}(., t)\|_{L^{\infty}(M)}, 
 \end{equation}
 
\noindent where $\hbox{diam}(M)$ is the diameter of $M$.  Hence it follows from  \eqref{40}, \eqref{401} and \eqref{402} that 

\begin{equation} \label{403}
\|u_{\varepsilon}\|_{L^{\infty}(M\times[0, T_{\varepsilon}))} + \|\nabla u_{\varepsilon}\|_{L^{\infty}(M\times[0, T_{\varepsilon}))} \le C + C\sqrt{T_{\varepsilon}} .
\end{equation}

\bigskip

Using   \eqref{403} and the results of Dibenedetto \cite{Dibenedetto},  we have for some $\beta \in (0, 1)$, 
\begin{equation} \label{41}
    \|u_\varepsilon\|_{C^{1+ \beta,\hspace{0.3mm} \beta/p}(M\times[0,T_\varepsilon))} \leq C_{T_\varepsilon},
\end{equation}
where the constant $C_{T_\varepsilon}$ depends also on $T_{\varepsilon}$. The theory of linear parabolic equations (see \cite{Ladyzenskaya})  together with \eqref{41} give, for some $ 0 < \alpha < 1$, 
\begin{equation} \label{42}
 \|u_\varepsilon\|_{C^{2+\alpha, 1+ {\alpha\over 2}}(M\times [0,T_\varepsilon))} \leq  C_{T_\varepsilon}
\end{equation}
where $C_{T_\varepsilon}$ is a  new constant that also depends on the modulus of ellipticity $\varepsilon$. Estimate \eqref{42} implies that $u_{\varepsilon}$ can be extended beyond $T_{\varepsilon}$ contradicting  thus the maximality  of $T_\varepsilon$. Hence we have $T_\varepsilon =+\infty$.    \\

Now we are in position to prove Theorem \ref{th1}.  By the  result  above,  $u_{\varepsilon}$ is defined on $[0, +\infty)$ and we have   by  \eqref{41} for all $T > 0$ 

\begin{equation} \label{420}
    \|u_\varepsilon\|_{C^{1+ \beta,\hspace{0.3mm} \beta/p}(M\times[0,T])} \leq C_{T},
\end{equation}

\medskip

\noindent where $C_T$ is a positive constant depending on $T, u_0, p, M$ and $\Omega$ but not on $\varepsilon$. 

\bigskip

It follows from  estimate \eqref{420}  that there exist a sequence $\varepsilon_k\rightarrow 0$ and  a map $u\in C_{loc}^{1+\beta, \hspace{0.5mm}\beta/p}(M\times [0,+\infty),\Omega)$ such that
$ u_{\varepsilon_k}\rightarrow u$ in $C_{loc}^{1+\beta', \hspace{0.5mm} \beta'/p}(M\times[0,+\infty),\Omega) $ for all $\beta'<\beta$. 
In addition, the energy formula  \eqref{23}  in Proposition \ref{21} gives for all $T>0$, 
\begin{equation}
\int_{0}^{T}\int_{M}|\partial_t u_{\varepsilon_k}(x,t)|^2  dx  dt +E_{p, \varepsilon_k}\big(u_{\varepsilon_k}( . , T)\big)  \leq  E_{p, \varepsilon_k}(u_0) 
\end{equation}

\noindent which implies that  $\partial_tu_{\varepsilon_k}\rightarrow\partial_t u$ weakly in $L^2(M\times [0,+\infty))$ and we have the energy inequality  for the limit $u$ 
\begin{equation} \label{43}
\int_{0}^{T}\int_{M}|\partial_t u(x,t)|^2  dx  dt + E_p\big(u(. , T)\big) \le E_p\big(u_0\big).
\end{equation} 

Passing to the limit in \eqref{21} when $\varepsilon_k \rightarrow 0$,  one can easily check that $u$ is a solution of \eqref{15}.

\bigskip

In order to prove the uniqueness of solutions, we recall the following well known inequality valid for any $a, b \in \R^L$ and $p\ge 2$ :

\begin{equation} \label{433}
\big\langle |a|^{p-2}a - |b|^{p-2} b , a-b \big\rangle \ge |a-b|^p ,
\end{equation}

\medskip

\noindent where $\langle  . , . \rangle $ and $| .| $ denotes Euclidean inner product and the corresponding Eucidean norm in $\R^L$. 

 \medskip

 Let $T>0$ and let $u_1, u_2 \in C^{1+\beta}(M\times[0,T])$ be two solutions of \eqref{15} such that $u_1(., 0) = u_2(., 0)$. If we set $w = u_1 - u_2$, then taking the difference of the equations satisfied by $u_1$ and $u_2$ (the same as equation \eqref{15}), multiplying it by $w$, integrating on $M\times[0,T]$, and using \eqref{433} along with the fact that $\nabla u_1$ and $\nabla u_2$ are bounded in $L^{\infty}(M\times [0,T])$, one can easily check that for any $t \in [0, T]$, 

$$\int_M |w(x,t)| dx  \le C  \int_0^t\int_M |w(x,s)| dx ds. $$
The right hand side of the above inequality is increasing in $t$, therefore
\begin{equation*}
\sup_{t'\in[0,t]}\int_{M}|w(.,t')|^2\ dx \leq Ct\sup_{t'\in[0,t]}\int_{M}|w(.,t') \ dx,
\end{equation*}
thus, for $t<{1\over C}$ we get $w\equiv 0$ for $t'\in [0,t]$. Iterating the argument proves the assertion.
\bigskip

Now let us  prove the convergence of the flow at infinity when the target manifold $N$ is compact. First we observe that by the energy inequality \eqref{43} we have 
$$ \int_{0}^{+\infty}\int_{M}|\partial_t u(x,t)|^2  dx dt  \leq E_p\big(u_0\big),$$
which implies the existence of a sequence $t_k \to + \infty$ such that 
\begin{equation} \label{44}
\int_{M}|\partial_t u(x, t_k)|^2 dx \to 0   \  \  \hbox{as} \  \  t_k \to + \infty.
\end{equation}

On the other hand,  it follows from estimate \eqref{40} and the fact that  $N$ is compact 

$$   \|u\|_{L^{\infty}(M\times[0, +\infty))} + \|\nabla u \|_{L^{\infty}(M\times[0, +\infty))} \le C $$
and the results of Dibenedetto \cite{Dibenedetto} imply that 
\begin{equation} \label{45}
   \|u\|_{C^{1+ \beta,\hspace{0.5mm}\beta/p}(M\times[0,+\infty))} \leq C.
 \end{equation} 

Hence  by passing to a subsequence if necessary,  we deduce from \eqref{45} that  $(u(. , t_k))_k$ converges in $C^{1+ \beta'}(M, \Omega)$  for all $\beta' < \beta$  to a map $u_{\infty}  \in C^{1+ \beta}(M, \Omega)$. By passing to the limit in equation \eqref{15} and using \eqref{44} we have that $u_{\infty}$ is a $p$-harmonic map satisfying $E_p(u_{\infty}) \le E_p(u_0)$. The proof of Theorem \ref{th1} is complete. 

\end{proof} 

\bigskip

The proof of Theorem \ref{th2} relies on Theorem \ref{th1} by using an exhaustion of $M$ by a sequence of compact manifolds and the following proposition.

\medskip

\begin{prop} \label{p41} Let $u$ be the solution of problem \eqref{15} given by Theorem \ref{th1}. Then for any ball $B(x_0, R) \subset  M$,  there exists a constant $C_R >0$  depending on $B(x_0, R), p$ and $\Omega$   such that 
$$ \sup_{t\ge 0} \|\nabla u(. , t)\|_{L^{\infty}(B(x_0, R/2))}  \le C_R \left(\int_M |\nabla u_0|^p dx +  \|\nabla u_0\|_{L^{\infty}(B(x_0, R))}  +  1\right) . $$
\end{prop}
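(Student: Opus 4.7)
The strategy is to apply the local gradient estimate of Proposition \ref{p32} to the regularised solutions $u_{\varepsilon}$ of \eqref{21}, which exist globally on $M\times[0,\infty)$ by the proof of Theorem \ref{th1}, then pass to the limit along the subsequence $\varepsilon_{k}\to 0$ produced there, and finally control the resulting space-time $L^{p}$ integral of $\nabla u$ via the energy inequality \eqref{16} for the limit $u$.

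Fix a ball $B(x_{0},R)\subset M$. From the proof of Theorem \ref{th1} we have a subsequence $u_{\varepsilon_{k}}$ converging to $u$ in $C^{1+\beta',\,\beta'/p}_{loc}(M\times[0,\infty),\Omega)$, so in particular $\nabla u_{\varepsilon_{k}}\to\nabla u$ uniformly on every compact subset of $M\times[0,\infty)$. Applying Proposition \ref{p32} to each $u_{\varepsilon_{k}}$ at $x_{0}$ with radius $R$ yields, for $t_{0}>R$, a bound on $\|\nabla u_{\varepsilon_{k}}\|_{L^{\infty}(B(x_{0},R/2)\times[t_{0}-R/2,t_{0}])}$ controlled by $\int_{B(x_{0},R)\times[t_{0}-R,t_{0}]}|\nabla u_{\varepsilon_{k}}|^{p}\,dx\,dt+1$, and for $t_{0}\le R$ the analogous bound on $[0,t_{0}]$ with the extra term $\|\nabla u_{0}\|_{L^{\infty}(B(x_{0},R))}$. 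Letting $\varepsilon_{k}\to 0$, the uniform $C^{1}$-convergence on each compact set passes both sides of these inequalities to the limit, giving the same estimates with $u$ in place of $u_{\varepsilon_{k}}$.

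It remains to bound the space-time integral uniformly in $t_{0}$. The energy inequality \eqref{16} for the limit $u$, after dropping the nonnegative $|\partial_{t}u|^{2}$ term, gives $E_{p}(u(\cdot,t))\le E_{p}(u_{0})$ for all $t\ge 0$, equivalently
\begin{equation*}
\int_{M}|\nabla u(x,t)|^{p}\,dx\ \le\ \int_{M}|\nabla u_{0}|^{p}\,dx.
\end{equation*}
Integrating in $t$ over any interval of length at most $R$ yields the uniform bound $\int_{B(x_{0},R)\times[a,b]}|\nabla u|^{p}\,dx\,dt\le R\int_{M}|\nabla u_{0}|^{p}\,dx$. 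Substituting into both cases above, letting $t_{0}$ range over $(R,\infty)$ in the first case (to cover $t\ge R/2$) and taking $t_{0}=R$ in the second case (to cover $t\in[0,R]$), then absorbing $R$ into $C_{R}$, produces the claimed estimate.

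The key technical point, and the reason for passing through the limit $u$ to bound the space-time integral rather than working with $u_{\varepsilon}$ directly, is to keep the constant $C_{R}$ dependent only on the local geometry of $B(x_{0},R)$ (together with $p$ and $\Omega$) and not on the ambient $M$: using $E_{p,\varepsilon}(u_{0})$ instead of $E_{p}(u_{0})$ would introduce an $\varepsilon^{p/2}\,\mathrm{vol}(M)$ contribution from the regularising term. This locality is what later makes the estimate suitable for the exhaustion argument in the proof of Theorem \ref{th2}.
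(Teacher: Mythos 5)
Your proof is correct and takes essentially the same approach as the paper, whose own argument is just the two lines ``apply Proposition \ref{p32} to $u_{\varepsilon_k}$ and pass to the limit''; you additionally spell out the step the paper leaves implicit, namely using the energy inequality \eqref{16} for the limit $u$ to bound the space--time integral of $|\nabla u|^p$ over time intervals of length at most $R$ by $R\int_M|\nabla u_0|^p\,dx$, and your remark on why this keeps $C_R$ free of a $\varepsilon^{p/2}\mathrm{vol}(M)$ contribution is a sensible observation consistent with how the estimate is used in the proof of Theorem \ref{th2}.
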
 

\begin{proof} As in the proof of Theorem \ref{th1},  $u$ is the limit of sequence $(u_{\varepsilon_k})_{k}$ of solutions to the regularised problem \ref{21} such that $\varepsilon_k \to 0$. Then if we apply Proposition \ref{p32} to $u_{\varepsilon_k}$and pass to the limit when $k \to \infty$, we obtain  easily the desired result.

\end{proof} 

\bigskip

\begin{proof}[ \textbf{\textit{Proof of Theorem \ref{th2} }}]

Suppose that $(M, g)$ is a complete noncompact Riemannian manifold. Let $(U_i)_{i\ge1}$  be an exhaustion of $M$ by compact manifolds with smooth boundaries. More precisely, each $U_i$  is an open set of $M$ such that $\overline{U}_i$ is a compact manifold with smooth boundary $\partial U_i$ and 
\begin{equation}\label{460}
\begin{cases}  \overline{U}_i \subset U_{i+1} \cr
{\displaystyle \bigcup_{i\ge1}U_i = M}.
\end{cases}
\end{equation}

 \medskip

 In order to apply  Theorem \ref{th1} it is necessary to consider manifolds without boundary. To this end, we consider for each $i \ge1$,  the double manifold of $U_i$ that we denote by $\widetilde{U}_i$.  Thus $\widetilde{U}_i$ is a compact manifold without boundary such that $\overline{U}_i \subset \widetilde{U}_i$ and the metric $g$ on  $\overline{U}_i$ extends to  a $C^{1}$-metric  $\widetilde{g}_i$ on $\widetilde{U}_i$. We smooth out $\widetilde{g}_i$ on a neighborhood of $\partial U_i$. More precisely, for a fixed $ 0< \varepsilon < {1\over 4}\hbox{diam}(U_1)$, where $\hbox{diam}(U_1)$ is the diameter of $U_1$,  we let 
 
$$U_i^{\varepsilon} = \big\{ x \in U_i  \ : \  d\left(x, \partial U_i\right) > \varepsilon \big\}.$$

 Then the new metric, that we denote still $\widetilde{g}_i$  for simplicity,  is $C^{\infty}$ on $\widetilde{U}_i$, it can be chosen arbitrary close to $g$ in the $C^1$-norm and it satisfies 

\begin{equation}\label{46}
\widetilde{g}_i = g \  \  \hbox{on} \ \  U_i^{\varepsilon}.
\end{equation}

\medskip

In the same way, we extend the initial datum $u_0$ to a map  $ \widetilde{u}_{0,i}$ on $\widetilde{U}_i$ that we smooth out on a  neighborhood of $\partial U_i$, and one can choose $\widetilde{u}_{0,i}$ arbitrary close to $u_0$ in the $C^1$-norm.  Thus we have  $ \widetilde{u}_{0,i} \in C^{\infty}\left(\widetilde{U}_i, \Omega \right)$  with

\begin{equation} \label{47}
\widetilde{u}_{0,i}= u_0 \  \  \hbox{on} \ \  U_i^{\varepsilon}, 
\end{equation}

\medskip

\noindent and we may suppose without loss of generality that 
\begin{equation} \label{48}
\int_{\widetilde{U}_i} |\nabla \widetilde{u}_{0,i}|^p d\widetilde{g}_i \le  2 \int_{U_i} |\nabla u_0|^p dx+ 1,
\end{equation}
where $d\widetilde{g}_i $ is the volume element with respect to $\widetilde{g}_i$ and $dx$ is the volume element with respect to $g$.

Then we consider on each  $\widetilde{U}_i$ the $p$-harmonic heat flow problem 
\begin{equation} \label{49}
    \left\{
                \begin{array}{ll}
                  \partial_t u-\widetilde{\Delta}_{p, i} u =|\nabla u|^{p-2}A(u)(\nabla u,\nabla u),\\
                  \\
                 u(x,0)=\widetilde{u}_{0,i}(x), \\
            
                \end{array}
              \right. 
 \end{equation}
 where $\widetilde{\Delta}_{p, i}$ is the $p$-Laplacian with respect to the metric $\widetilde{g}_i$.  
 
 \medskip
 
 Thanks to Theorem \ref{th1},  problem \eqref{49} admits a global solution  $u_i \in C_{loc}^{1+ \beta, \hspace{0.3mm}\beta/p}\left( \widetilde{U}_i \times[0, +\infty), \Omega\right)$ satisfiying the $p$-energy inequality 
 \begin{equation} \label{50}
  \int_0^T \int_{\widetilde{U}_i} |\partial_t u_i|^2 d\widetilde{g}_i dt  + {1\over p} \int_{\widetilde{U}_i} |\nabla u_i(. , T) |^p d\widetilde{g}_i  \le {1\over p} \int_{\widetilde{U}_i} |\nabla \widetilde{u}_{0,i}|^p d\widetilde{g}_i 
  \end{equation}
 for all $T>0$. 
 
 \medskip

 Since $\widetilde{g}_i = g$ on $U_i^{\varepsilon}$, then $u_i$ is a solution of equation \eqref{15} in $U_i^{\varepsilon}$.   We shall prove uniform gradient estimates on $u_i$ on fixed balls of $M$. For each fixed $R>0$,  we denote by  $C_R$ a positive constant that depends on $R, M, p, \Omega$ and the initial datum $u_0$, and whose  value may change from line to line.  Fix $x_0 \in M$ and $R>0$, then we have from \eqref{460}  and the definition of $U_i^{\varepsilon}$, for $i$ large enough,  that $B(x_0, R) \subset U_i^{\varepsilon}$. It follows from  Proposition \ref{p41}  that 
 $$ \sup_{t\ge 0} \|\nabla u_i(. , t)\|_{L^{\infty}(B(x_0, R/2))}  \le C_R \left(\int_{\widetilde{U}_i}  |\nabla \widetilde{u}_{0,i}|^p dx +  \|\nabla \widetilde{u}_{0,i}\|_{L^{\infty}(B(x_0, R))}  +  1\right)  $$
 and by \eqref{48} we get 
 
\begin{equation} \label{51}
\begin{split}
\sup_{t\ge 0} \|\nabla u_i(. , t)\|_{L^{\infty}(B(x_0, R/2))}  & \le C_R \left(\int_{M}  |\nabla u_0|^p dx +  \|\nabla u_0\|_{L^{\infty}(B(x_0, R))}  +  1\right)\\
& \le C_R.
\end{split}
\end{equation}

As in the proof of Theorem \ref{th1}, by using \eqref{48}, \eqref{50}, \eqref{51}  and the mean value Theorem, we have for any $T >0$, 

\begin{equation} \label{52}
\|u_i\|_{L^{\infty}(B(x_0, R/2)\times [0, T])} \le C_R + C_R \sqrt{T}.
\end{equation} 

It follows from \eqref{51}, \eqref{52} and  the results of Dibenedetto \cite{Dibenedetto} on degenerate parabolic equations that 

 \begin{equation} \label{53}
 \|u_i\|_{C^{1+ \beta,\hspace{0.3mm}\beta/p}(B(x_0, R/2) \times [0, T])} \le C_{R,T},
 \end{equation} 
 
 \medskip
 
\noindent  for some constant $\beta \in (0, 1)$, where the constant $C_{R,T}$ depends also on $T$. 

\medskip

Since $B(x_0, R) \subset U_i^{\varepsilon}$ for $i$ large enough and $\widetilde{g}_i = g$ on $U_i^{\varepsilon}$, then we have from \eqref{50}  and \eqref{48} that 

\begin{equation} \label{54}
\int_0^T \int_{B(x_0, R)} |\partial_tu_i(x,t)|^2 dx dt  \le   E_p(u_0) + 1.
\end{equation} 

\medskip
 
Thus if we set $T= R= R_k$, where $(R_k)_k$ is a sequence such that $R_k \to + \infty$, then by using the Cantor diagonal argument,  it follows from \eqref{53} and \eqref{54} that there exists a subsequence $(u_{i_k})_k$ and a map $u \in  C_{loc}^{1+ \beta, \hspace{0.5mm}\beta/p}(M\times[0,+\infty), \Omega)$  such that 
 
 \begin{equation*} 
 u_{i_k} \longrightarrow u \  \  \hbox{in} \  \  C^{1+ \beta', \hspace{0.4mm}\beta'/p}(B(x_0, R) \times[0,T), \Omega) \  \  \hbox{for all} \   R, T>0, \ 0 < \beta' < \beta,
 \end{equation*} 
 
 \noindent and 
   \begin{equation*} 
 \partial_tu_{i_k} \longrightarrow \partial_t u \  \  \hbox{weakly in} \  \  L^2(B(x_0, R) \times[0,T), \Omega) \  \  \hbox{for all} \   R, T>0. 
 \end{equation*} 
 
 It is easy to check that  by passing to the limit in \eqref{49} and using \eqref{46}  and \eqref{47}, $u$ is a solution of \eqref{15}. By passing to the limit in \eqref{50}, one obtains   formula  \eqref{17}  in Theorem \ref{th2}.

  \medskip
  
 When $N$ is compact, the convergence of the flow can be  proved in the same way as in the proof of Theorem \ref{th1}.  Indeed,  if we take $i=i_k$  in \eqref{54}   and pass to the limit when $k \to +\infty$, we obtain for any $R, T>0$
 
 \begin{equation*} 
\int_0^T \int_{B(x_0, R)} |\partial_tu(x,t)|^2 dx dt  \le   E_p(u_0) + 1.
\end{equation*} 

\noindent  which implies that by letting $T\to +\infty$ and $R \to +\infty$, 

  \begin{equation}\label{56}
 \int_0^{\infty}\int_M  |\partial_tu(x,t)|^2 dx dt  \le   E_p(u_0) + 1. 
 \end{equation}
 
 \noindent It follows from  \eqref{56} that there exits a sequence $t_j \to + \infty$ such that 
\begin{equation} \label{57}
\int_{M}|\partial_t u(x, t_j)|^2 dx \to 0   \  \  \hbox{as} \  \  t_j \to + \infty.
\end{equation}

On the other hand,   if we take $i=i_k$ in  \eqref{51}   and pass to the limit when $k \to +\infty$, we obtain  for any $R>0$
\begin{equation*}
\|\nabla u\|_{L^{\infty}(B(x_0, R) \times[0,+\infty))} \le C_R
\end{equation*} 

\medskip

\noindent which together with  the results of Dibenedetto \cite{Dibenedetto} imply, since $N$ is compact, 
\begin{equation} \label{58}
   \|u\|_{C^{1+ \beta,\hspace{0.3mm}\beta/p}(B(x_0, R)\times[0,+\infty))} \leq C_R.
 \end{equation} 

Hence  by taking a sequence $R_j\to \infty$, it follows from  \eqref{57}, \eqref{58}  and the Cantor Diagonal argument that the sequence $u(. , t_j)$  admits a subsequence  that converges in $C^{1+ \beta'}(M, \Omega)$  for all $\beta' < \beta$  to a map $u_{\infty}  \in C^{1+ \beta}(M, \Omega)$. By passing to the limit in equation \eqref{15} and using \eqref{57} we deduce that $u_{\infty}$ is a $p$-harmonic map satisfying $E_p(u_{\infty}) \le E_p(u_0)$. The proof of Theorem \ref{th2} is complete.

\end{proof}


\begin{proof}[ \textbf{\textit{Proof of Theorem \ref{th3} }}] 

Theorem \ref{th3} is a direct consequence of Theorem \ref{th2} since a manifold $N$ with nonpositive sectional curvature is a $\delta$-generalised regular ball for any $\delta >0$ (see Example \ref{exa1} ). In this case, it suffices to apply Theorem \ref{th2} by taking  any $\delta > \delta_p$. 

\end{proof} 

\bigskip


For the proof of Theorem \ref{th4} we need a modified version of Proposition \ref{p31} concerning solutions of the $p$-harmonic equation \eqref{14}. 

\bigskip

\begin{prop}\label{p42}
 Let $(M^m,g)$,  $(N^n,h)$ be two  Riemannian manifolds and let  $\Omega \subset N$  be a $\delta$-regular set.  Let  $u  \in C^{1}(M, \Omega)$ a $p$-harmoinc map and  set 
$$\varphi(x)=\frac{F(x,t)}{f^2(u(x)},$$ 
 where $F(x)= |\nabla u(x)|^2$ and $f$ is the function satisfying condition \eqref{c1}.  Then we have on the set $\big\{ x \in M \ : \ \nabla u(x) \not= 0 \big\}$, 
\begin{equation}\label{59}
\begin{split}
    & - \hbox{div}\left(F^{p-2 \over 2}   \nabla \varphi\right)  \leq  (p-2)\hbox{div}\left(F^{p-2 \over 2} B( d \varphi,.)\right)   - {1\over 40} F^{{p-4\over 2}}(f\circ u)^2|\nabla \varphi|^2  \\
     &- 2(\delta-\delta_p) {F^{{p\over 2}}\over (f\circ u)^4}\left|(\nabla f)\circ u \right|^2 |\nabla u|^2 + \  2 K_1 {F^{{p-2\over 2}}\over (f\circ u)^2} |\nabla u|^2, 
     \end{split}
\end{equation}
where  $\delta_p =   3(p-2)^2\Big( \sqrt{m} + 2p+ 2  \Big)^2 + 3$   \ and \  $-K_1 \le 0$  is a lower bound of the Ricci curvature of $M$. The tensor $B$ is defiened in Section \ref{s2} by \eqref{25} (by taking $\varepsilon = 0$). . 
\end{prop}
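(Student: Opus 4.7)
The plan is to mirror the argument of Proposition \ref{p31} in the stationary setting, dropping every occurrence of $\partial_t$. First I would check that on the open set $\{x\in M:\nabla u(x)\ne 0\}$ the $p$-Laplacian system \eqref{14} is uniformly (non-degenerate) elliptic, so by Tolksdorf's regularity theory \cite{Tolksdorf} the map $u$ is $C^{\infty}$ there. Consequently $F=|\nabla u|^2$ is smooth on this set and all second-order computations that appear in the proof of Proposition \ref{p31} are legitimate pointwise.

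The starting point is the stationary analog of the Bochner identity \eqref{26}. Since $\partial_t u \equiv 0$, the computation that produced \eqref{26} (essentially $\Delta_p|\nabla u|^2$ expanded via commutation of covariant derivatives, plus the use of $-\Delta_p u = |\nabla u|^{p-2}A(u)(\nabla u,\nabla u)$) delivers, on $\{\nabla u\ne 0\}$,
\begin{equation*}
\begin{split}
-\hbox{div}\left(F^{p-2\over 2}\nabla F\right) = \ & (p-2)\hbox{div}\left(F^{p-2\over 2} B(dF,\cdot)\right)-2F^{p-2\over 2}|\nabla^2 u|^2-\tfrac{p-2}{2}F^{p-4\over 2}|\nabla F|^2\\
& -2F^{p-2\over 2}\hbox{Ric}^M(\nabla u,\nabla u)+2F^{p-2\over 2}\langle\hbox{Riem}^N(\nabla u,\nabla u)\nabla u,\nabla u\rangle.
\end{split}
\end{equation*}
This is exactly \eqref{26} with the $\partial_t F$ term removed and $\varepsilon=0$, and it is obtained by the same normal-coordinate computation used to derive \eqref{26}.

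From this identity I would carry out verbatim the chain of estimates \eqref{31}--\eqref{043}, replacing $\partial_t\varphi-L_p\varphi$ by $-\hbox{div}(F^{(p-2)/2}\nabla\varphi)$ throughout. The decomposition identity analogous to \eqref{31} for $\varphi = F/(f\circ u)^2$ is a straightforward computation of the divergence of a quotient; the crucial relation \eqref{33} survives unchanged because it only uses the orthogonality of $(\nabla f)\circ u\in T_uN$ to $A(u)(\nabla u,\nabla u)$ together with $-\Delta_p u=|\nabla u|^{p-2}A(u)(\nabla u,\nabla u)$, both of which are still available for a $p$-harmonic map; the $\delta$-regular-set condition \eqref{c1} produces the curvature absorption exactly as in \eqref{35}; and the Young-inequality optimizations in $\alpha$ and $\beta$ go through identically, leading to the same critical constant $\delta_p$.

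The proof therefore presents no new analytic difficulty; the main thing to be careful about is the domain of validity, namely the restriction to $\{\nabla u\ne 0\}$, since $F^{(p-4)/2}$ and the Hessian $\nabla^2 u$ are only meaningful there. This is precisely what the statement of the proposition records, so no further work is required beyond faithfully transcribing the computations of Proposition \ref{p31}.
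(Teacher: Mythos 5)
Your proposal matches the paper's own proof, which simply observes that the argument of Proposition \ref{p31} carries over verbatim once the parabolic term $\partial_t u$ is dropped, with the computation restricted to the open set $\{\nabla u\neq 0\}$ where the map is smooth enough for the elliptic Bochner formula. Your added remarks on regularity via \cite{Tolksdorf} and on the survival of the key orthogonality step \eqref{33} are consistent with, and slightly more explicit than, what the paper records.
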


\medskip

\begin{proof} The proof is exactly the same as that of Proposition \ref{p31}. It is even  more easier since the parabolic term  $\partial_t u$ is not present. Nevertheless,  we have to consider only points $x \in M$ such that $\nabla u(x) \not=0$. The reason is that our $p$-harmonic map is  sufficienltly smooth at such points  to apply the elliptic version of Bochner formula \eqref{26}. 

\end{proof}

\begin{proof}[ \textbf{\textit{Proof of Theorem \ref{th4} }}] 

In this proof $C$ denotes a positive constant depending only on $M, \Omega$ and $p$, and whose value may change from line to line. Define the set $E$ by
$$E = \big\{ x \in M \ : \  \nabla u(x) \not=0 \big\}$$
which  is an open set of $M$ since $u \in C^{1}(M)$. By the regularity theory of elliptic equations  we have $u \in C^{\infty}(E)$.

\medskip

As in Proposition \ref{p42}, we set $\varphi=\frac{F}{f^2(u)}$,  where $F= |\nabla u|^2$, and $f$ is as in  \eqref{c1}.   For $\varepsilon >0$, let $\varphi_{\varepsilon} = (\varphi - \varepsilon)^{+} $. Then $\varphi _{\varepsilon}$ is a locally Lipschitz function on $M$  with support in $E$  and satisfies 

\begin{equation} \label{06}
\begin{cases} 
(i) \ \  \varphi_{\varepsilon}(x) = 0  \  \hspace{1,5cm} \hbox{if} \    \varphi(x) <  \varepsilon \cr
(ii)\ \   \varphi_{\varepsilon}(x)= \varphi(x) -\varepsilon \  \ \    \hbox{if} \   \varphi(x) \ge \varepsilon \cr 
(iii) \ \  \nabla  \varphi_{\varepsilon}(x) = \nabla \varphi(x) \  \       \hbox{if} \   \varphi(x) \ge \varepsilon \cr 
(iv) \  \  \nabla  \varphi_{\varepsilon}(x) =  0 \  \    \   \   \    \  \  \   \   \hbox{if} \   \varphi(x) <  \varepsilon. 
\end{cases}
\end{equation}

\medskip

Fix a point $x_0 \in E$ and  $R> 0$,  and   let $\phi_R \in C_0^{1}(B(x_0, 2R))$ such that 
\begin{equation}\label{60}
\begin{cases}
0 \le \phi_R \le 1 \cr 
\phi_R = 1  \  \  \hbox{on} \  \  B(x_0, R) \cr
|\nabla\phi_R | \le C R^{-1}.
\end{cases}
\end{equation}

\bigskip

We have  from Proposition \ref{p42}, since by hypothesis we have $K_1= 0$ ($M$ is supposed to be of nonnegative Ricci curvature) and $\delta > \delta_p$, 
\begin{equation} \label{61}
- \hbox{div}\left(F^{p-2 \over 2}   \nabla \varphi\right) + {1\over 40} F^{{p-4\over 2}}f^2(u)|\nabla \varphi|^2  \leq  (p-2)\hbox{div}\left(F^{p-2 \over 2} B(d \varphi,.)\right). 
\end{equation} 
   
   \medskip

 If we multiply \eqref{61} by $\phi_R^2 \varphi_{\varepsilon}\varphi^{-1} $ and integrate on $E$ by using \eqref{06} we have 
 
\begin{equation} \label{62}
    \begin{split}
   & \varepsilon\int_{E\cap B(x_0, 2R)} F^{p-2 \over 2} \phi_R^2 \varphi^{-2} |\nabla \varphi_{\varepsilon}|^2  dx  + {1\over 40}  \int_{E\cap B(x_0, 2R)} F^{{p-4\over 2}}f^2(u)\phi_R^2 \varphi_{\varepsilon} \varphi^{-1} |\nabla \varphi_{\varepsilon}|^2 \ dx \le  \\
   &  -2   \int_{E\cap B(x_0, 2R)} F^{p-2 \over 2} \varphi_{\varepsilon}  \phi_R \varphi^{-1} \nabla\varphi_{\varepsilon}\cdot\nabla \phi_R  dx  - \varepsilon(p-2) \int_{E\cap B(x_0, 2R)} F^{p-2 \over 2} \phi_R^2\varphi^{-2} B(d\varphi_{\varepsilon} ,d \varphi_{\varepsilon})   dx  \\
   & - 2 (p-2)   \int_{E\cap B(x_0, 2R)} F^{p-2 \over 2} \varphi_{\varepsilon}  \phi_R \varphi^{-1} B(d\varphi_{\varepsilon}, d\phi_R)  dx.
    \end{split}
\end{equation}

\medskip

 Since  by \eqref{250} we have $ B(d\varphi_{\varepsilon} , d\varphi_{\varepsilon}) \ge 0 $ and  $|B(d\varphi_{\varepsilon} , d\phi_R)|\le |\nabla\varphi_{\varepsilon}| |\nabla \phi_R|$, then it follows from \eqref{62} by using \eqref{60}  and the fact that 
 $F= {\varphi \over f^2(u)}$ : 
 
 \begin{equation*}
    \begin{split}
   & {39\over 40}\varepsilon\int_{E\cap B(x_0, 2R)} F^{p-2 \over 2} \phi_R^2 \varphi^{-2} |\nabla \varphi_{\varepsilon}|^2  dx  + {1\over 40}  \int_{E\cap B(x_0, 2R)} F^{{p-4\over 2}}f^2(u)\phi_R^2  |\nabla \varphi_{\varepsilon}|^2 \ dx    \le \\
   & C R^{-1} \int_{E\cap B(x_0, 2R)} F^{p-2 \over 2}\phi_R |\nabla \varphi_{\varepsilon}|  dx 
       \end{split}
\end{equation*}

\medskip
 
\noindent  which implies, since $C^{-1} \le f \le C$, 
 
  \begin{equation} \label{251}
 \int_{E\cap B(x_0, 2R)} F^{{p-4\over 2}}\phi_R^2  |\nabla \varphi_{\varepsilon}|^2 \ dx       \le C R^{-1} \int_{E\cap B(x_0, 2R)} F^{p-2 \over 2}\phi_R |\nabla \varphi_{\varepsilon}|  dx .
\end{equation}

 \medskip
 
 On the other hand, we have by the Cauchy-Schwarz inequality  
 
 $$  \int_{E\cap B(x_0, 2R)} F^{p-2 \over 2}\phi_R |\nabla \varphi_{\varepsilon}|  dx \le \left(\int_{E\cap B(x_0, 2R)} F^{p-4 \over 2}\phi_R^2 |\nabla \varphi_{\varepsilon}|^2  dx\right)^{1\over 2}\left(\int_{E\cap B(x_0, 2R)} F^{p \over 2}\ dx\right)^{1\over 2} .$$
 
\noindent Hence it follows from \eqref{251} that 

$$  \int_{E\cap B(x_0, 2R)} F^{{p-4\over 2}}\phi_R^2  |\nabla \varphi_{\varepsilon}|^2 \ dx  \le  CR^{-2} \int_{E\cap B(x_0, 2R)} F^{p \over 2}\ dx$$

\medskip

\noindent and  since $\phi_R = 1 $ on $B(x_0, R)$, then we obtain 
 
   \begin{equation} \label{64}
\int_{E\cap B(x_0, R)} F^{{p-4\over 2}} |\nabla \varphi_{\varepsilon}|^2 \ dx  \le  CR^{-2} \int_{E\cap B(x_0, 2R)} F^{p \over 2}\ dx \le CR^{-2}E_p(u).
\end{equation}

\medskip

Thus  by letting $R \to + \infty$ in \eqref{64} we obtain  $F^{p-2 \over 2} |\nabla \varphi_{\varepsilon}|^2 = 0$ on $E$, and then $ \nabla\varphi_{\varepsilon} = 0$ on $E$ since $ F >0$ on $E$. The constant $\varepsilon >0$ being arbitrary, we have then 

\begin{equation}  \label{65}
\nabla \varphi = 0 \ \hbox{on} \   E.
\end{equation}

\medskip

Our objective is to prove that $F= 0$ on $M$.   Suppose by contradiction that there exist $x_0 \in M$ such that $F(x_0) \not=0$, that is,  $x_0 \in E$. Let ${\mathcal C}_0 \subset E$  be the connected component of $x_0$ in $E$. Since ${\mathcal C}_0$ is open and connected, then  by \eqref{65}  $\varphi$  is constant on ${\mathcal C}_0$, that is, $F= \lambda_0 f(u)$ on ${\mathcal C}_0$  for some constant $\lambda_0 \ge 0$. But by \eqref{c1}, we have $f \ge C^{-1}$. Hence  we have 
\begin{equation} \label{66}
 F \ge \lambda_0 C^{-1} \  \  \hbox{on} \   {\mathcal C}_0.
 \end{equation}
 
\medskip

We distinguish two cases : $E =M$ and $E \not=M$.
 
\medskip

\underline{\bf Case 1 :  $E=M$.} Since $M$ is connected, then ${\mathcal C}_0= M$ in this case.   We have by hypothesis
$$\int_M F^{p\over 2} dx = \int_M |\nabla u|^p dx < + \infty$$
which implies by using \eqref{66}  that $\lambda_0 = 0$.  Hence $F= 0$ on $M$  contradicting the fact that $F(x_0) \not=0$. 

\medskip

\underline{\bf Case 2 :  $E\not=M$.}  In this case we have $\partial {\mathcal C}_0 \not= \emptyset$, where $\partial {\mathcal C}_0$ is the topological boundary of ${\mathcal C}_0$. It follows from \eqref{66}    by continuity of $F$  that   $F \ge \lambda_0 C^{-1}$ on $\partial {\mathcal C}_0$.   On the other hand, by the definition of a connected component  and since $E$ is open,  we have $\partial {\mathcal C}_0 \subset M\setminus E$.  This implies that $F=0$ on $\partial {\mathcal C}_0$, and then $\lambda_0 = 0$.  Thus we have $F=0$ on  ${\mathcal C}_0$ contradicting $F(x_0) \not= 0$. 

\bigskip

Therefore, we have  proved that $F =0$, which implies that $u$ is constant on $M$ since $M$ is connected.  The proof of Theorem \ref{th4} is complete.

\end{proof}

\bigskip


\end{document}